\newcommand{\w}{\omega}
\newcommand{\Ra}{\Rightarrow}
\newcommand{\IR}{\mathbb R}
\newcommand{\I}{\mathcal I}
\newcommand{\U}{\mathcal U}
\newcommand{\pr}{\mathrm{pr}}
\newtheorem{theorem}{Theorem}[section]
\newtheorem{proposition}[theorem]{Proposition}
\newtheorem{example}[theorem]{Example}
\newtheorem{corollary}[theorem]{Corollary}
\newtheorem{lemma}[theorem]{Lemma}
\newtheorem{conjecture}[theorem]{Conjecture}
\newtheorem{question}[theorem]{Question}
\theoremstyle{definition}
\newtheorem{remark}[theorem]{Remark}
\title{Banalytic spaces and characterization of Polish groups}
\author{Taras Banakh and Alex Ravsky}
\address{T.Banakh: Institute of Mathematics, Jan Kocahnowski University in Kielce (Poland) and Ivan Franko National University of Lviv (Ukraine)}
\email{t.o.banakh@gmail.com}
\address{A.Ravsky: Pidstryhach Institute for Applied Problems of Mechanics and Mathematics National Academy of Sciences of Ukraine, Lviv, Ukraine}
\email{alexander.ravsky@uni-wuerzburg.de}
\keywords{topological group, Baire space, Choquet space, strong Choquet space, Choquet game}
\subjclass{22A05, 54E52}
\begin{document}
\begin{abstract} A topological space is defined to be banalytic (resp. analytic) if it is the image of a Polish space under a Borel (resp. continuous) map.  A regular topological space is analytic if and only if it is banalytic and cosmic. Each (regular) banalytic space has countable spread (and under PFA is hereditarily Lindel\"of). Applying banalytic spaces to topological groups, we prove that for a Baire topological group $X$ the following conditions are equivalent: (1) $X$ is Polish, (2) $X$ is analytic, (3) $X$ is banalytic and cosmic, (4) $X$ is banalytic and has countable pseudocharacter. Under PFA the conditions (1)--(4) are equivalent to the banalycity of $X$. The conditions (1)--(3) remain equivalent for any Baire  semitopological group.
\end{abstract}

\maketitle

\section{Introduction}

In this space we introduce banalytic spaces, which are generalizations of analytic spaces, and then apply analytic and banalytic spaces to characterizing Polish topological groups in the classes of topological, semitopological and paratopological groups.

A topological space is
\begin{itemize}
\item {\em Polish} if it is homeomorphic to a separable complete metric space;
\item {\em cosmic} if $X$ is regular and is a continuous image of a separable metrizable space;
\item {\em analytic} if it is a continuous image of a Polish space;
\item {\em banalytic} if it is a Borel image of a Polish space, i.e., the image of a Polish space under a surjective Borel function.
\end{itemize}

We recall that a function $f:X\to Y$ between topological spaces is {\em Borel} if for any open set $U\subset Y$ its preimage $f^{-1}(U)$ is a Borel subset of $X$. In this case the preimage $f^{-1}(B)$ of any Borel set $B\subset Y$ is a Borel subset of $X$.

Analytic spaces are well-studied in the Descriptive Set Theory  in the frameworks of metrizable spaces \cite{Ke}, \cite{Most} and of general topological spaces  \cite{Analytic}, \cite{KKP}. On the other hand, the notion of a banalytic space seems to be new.

It is clear that each analytic space is banalytic. The Sorgenfrey line is banalytic but fails to have a countable network and hence is not analytic (see Example~\ref{e:S} for more details).

Therefore, banalytic spaces form a class of topological spaces that includes all analytic spaces. In Corollary~\ref{c:ban} we shall prove that a regular topological space is analytic if and only if it is cosmic and banalytic. In Theorem~\ref{t:spread} we prove that (regular) banalytic $T_{\mathsf{Borel}}$-spaces have countable spread (and under PFA are hereditarily Lindel\"of). Here PFA is the abbreviation for the Proper Forcing Axiom, see \cite{Baum}, \cite{Moors}. A topological space $X$ is defined to have {\em countable spread} (resp. {\em countable extent}) if each (closed) discrete subspace of $X$ is at most countable.

A topological space $X$ is called a {\em $T_{\mathsf{Borel}}$-space} if each singleton $\{x\}\subset X$ is a Borel subset of $X$.  This separation axiom was introduced by Harley and McNultey \cite{HMc} and was also studied in \cite{BB}. By \cite{HMc}, for any topological space we have the implications: $T_1\Ra T_{\mathsf{Borel}}\Ra T_0$. 

Applying (b)analytic spaces to topological groups, in Section~\ref{s:G} we prove the following characterization of Polish groups.

\begin{theorem}\label{t:G} For a topological group $X$ the following conditions are equivalent:
\begin{enumerate}
\item $X$ is Polish;
\item $X$ is a Baire analytic $T_0$-space;
\item $X$ is Baire, banalytic and cosmic;
\item $X$ is Baire, banalytic and has countable pseudocharacter.
\end{enumerate}
Under PFA, the conditions \textup{(1)--(4)} are equivalent to
\begin{itemize}
\item[(5)] $X$ is a Baire banalytic $T_0$-space.
\end{itemize}
\end{theorem}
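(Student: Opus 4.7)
My plan is to close the cycle $(1) \Rightarrow (2) \Rightarrow (3) \Rightarrow (4) \Rightarrow (1)$ and, separately, to obtain $(5) \Rightarrow (4)$ under PFA. The implications $(1) \Rightarrow (2)$ and $(2) \Rightarrow (3)$ are bookkeeping: a Polish space is Baire (Baire Category Theorem), analytic (via the identity), and $T_0$; and a $T_0$ topological group is Tychonoff, so a Baire analytic $T_0$ group is Baire, regular, banalytic, and a continuous image of a separable metrizable space, hence cosmic. For $(3) \Rightarrow (4)$ I would use that cosmic spaces are hereditarily Lindel\"of and combine this with regularity: for each $x$, covering $X \setminus \{x\}$ by open sets whose closures avoid $x$ (possible by regularity) and extracting a countable subcover presents $X \setminus \{x\}$ as an $F_\sigma$, so $\{x\}$ is $G_\delta$.

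The substance of the proof is $(4) \Rightarrow (1)$, which I would carry out in three stages. First, since $X$ is a Hausdorff topological group with $\{e\}$ a $G_\delta$, the Birkhoff--Kakutani theorem gives that $X$ is metrizable. Second, $X$ is regular, $T_\mathsf{Borel}$ and banalytic, so by Theorem~\ref{t:spread} it has countable spread; in a metrizable space countable spread is separability, so $X$ is a cosmic banalytic space, hence analytic by Corollary~\ref{c:ban}. Third, I would embed $X$ as a dense subgroup of a Polish group $Y$ (for instance, by passing to the Raikov completion of the separable metrizable topological group $X$). Then $X$ is an analytic subgroup of the Polish group $Y$, and so has the Baire property in $Y$. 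Density of $X$ in $Y$ combined with $X$ being a Baire space prevents $X$ from being meager in $Y$: any decomposition $X = \bigcup_n M_n$ with each $M_n$ nowhere dense in $Y$ would, by density of $X$ in $Y$, make each $M_n$ nowhere dense in $X$, contradicting the Baire-ness of $X$. Applying the Pettis lemma to the non-meager, Baire-property set $X$ inside the topological group $Y$, the product $X \cdot X^{-1} = X$ contains a $Y$-neighborhood of the identity, so $X$ is open in $Y$, hence closed, hence equal to $Y$ by density. Thus $X$ is Polish.

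For the PFA clause, $(1) \Rightarrow (5)$ is trivial, and under PFA Theorem~\ref{t:spread} strengthens ``countable spread'' to ``hereditarily Lindel\"of'' for regular banalytic $T_\mathsf{Borel}$-spaces; the regularity-plus-hereditary-Lindel\"of trick already used for $(3) \Rightarrow (4)$ then yields countable pseudocharacter, giving $(5) \Rightarrow (4)$.

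I expect the main obstacle to lie in the third stage of $(4) \Rightarrow (1)$: it orchestrates several non-trivial external ingredients (a Polish-group overspace for the separable metrizable topological group $X$, the Baire property of analytic subsets of Polish spaces, and the Pettis lemma), and the reduction from ``$X$ is a Baire space'' to ``$X$ is non-meager in $Y$'' requires the careful density argument above to transfer meagerness correctly between the two topologies.
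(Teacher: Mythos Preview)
Your cycle $(1)\Rightarrow(2)\Rightarrow(3)\Rightarrow(4)$ is fine, and your Stage~3 argument (Raikov completion, Baire property of analytic sets, Pettis) is correct and is essentially what the paper does in Lemma~\ref{l:G1} once metrizability is in hand. The problem is Stage~1 of $(4)\Rightarrow(1)$.

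The Birkhoff--Kakutani theorem says that a Hausdorff topological group is metrizable if and only if it is \emph{first-countable}; having $\{e\}$ as a $G_\delta$-set (countable pseudocharacter) is strictly weaker. For instance, $C_p([0,1])$ is a Hausdorff topological group in which $\{0\}$ is a $G_\delta$-set (use the countably many evaluation functionals at rational points), yet it is not first-countable, hence not metrizable. So from hypothesis~(4) alone you cannot invoke Birkhoff--Kakutani, and your Stage~2 (which needs metrizability to convert countable spread into separability) never gets off the ground. Under hypothesis~(3) you \emph{do} have a countable network, and the paper's Lemma~\ref{l:Bm} shows how Baire plus a countable closed network yields a countable neighbourhood base at $e$; but by routing $(3)$ through $(4)$ you discarded exactly the information needed for that step.

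The paper's proof of $(4)\Rightarrow(1)$ (Lemma~\ref{l:G3}) avoids this trap and is substantially different from your outline: it uses Theorem~\ref{t:spread} to get countable spread, hence $\omega$-narrowness; then Guran's embedding theorem places $X$ inside a product $\prod_{\alpha\in A}X_\alpha$ of second-countable groups; countable pseudocharacter makes some countable projection $\pr_C$ injective; the factorization Theorem~\ref{t:fac} upgrades $C$ to a countable $D$ with $X_D$ Baire (hence Polish by the already-established $(3)\Rightarrow(1)$); and finally a Pettis-type argument shows the bijection $\pr_D\colon X\to X_D$ is open. The factorization theorem is doing the real work your Stage~1 was meant to do. Your treatment of $(5)\Rightarrow(4)$ under PFA is essentially correct, though the paper cites Todor\v{c}evi\'c's result (Hausdorff plus countable spread implies countable pseudocharacter under PFA) directly rather than passing through hereditary Lindel\"ofness.
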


We recall that a topological space is {\em Baire} if for any sequence $(U_n)_{n\in\w}$ of dense open subsets in $X$ the intersection $\bigcap_{n\in\w}U_n$ is dense in $X$. A topological space $X$ has {\em countable pseudocharacter} if each singleton $\{x\}\subset X$ is a $G_\delta$-set in $X$. In this case $X$ is a $T_{\mathsf{Borel}}$-space.

For topological groups whose topology is generated by an invariant metric the equivalence $(1)\Leftrightarrow(2)$ in Theorem~\ref{t:G} was proved by Christensen in \cite[5.4]{Chris}.

In Sections~\ref{s:S} and \ref{s:P} we shall apply Theorem~\ref{t:G} to characterize Polish topological groups among semitopological and paratopological groups.

A group $X$ endowed with a topology is called
\begin{itemize}
\item a {\em semitopological group} if the group operation $X\times X\to X$, $(x,y)\mapsto xy$, is separately continuous;
\item a {\em paratopological group} if the group operation $X\times X\to X$, $(x,y)\mapsto xy$, is continuous;
\item a {\em topological group} if $X$ is a paratopological group with continuous inversion $X\to X$, $x\mapsto x^{-1}$.
\end{itemize}

The following characterization is a ``semitopological'' version of  Theorem~\ref{t:G}.

\begin{theorem}\label{t:S} For a semitopological group $X$ the following conditions are equvalent:
\begin{enumerate}
\item $X$ is a Polish topological group;
\item $X$ is a banalytic cosmic Baire space;
\item $X$ is banalytic, Baire, has countable pseudocharacter and is topologically isomorphic to a subgroup of the Tychonoff product of cosmic semitopological groups.
\end{enumerate}
Under PFA the conditions  \textup{(1)--(3)} are equivalent to
\begin{itemize}
\item[(4)] $X$ is banalytic, Baire, and $X$ is topologically isomorphic to a subgroup of the Tychonoff product of cosmic semitopological groups.
\end{itemize}
\end{theorem}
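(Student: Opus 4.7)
The implications $(1)\Rightarrow(2)$ and $(1)\Rightarrow(3)$ are immediate: a Polish topological group is Baire, cosmic, analytic (and hence banalytic), has countable pseudocharacter, and embeds into itself viewed as a one-factor Tychonoff product of cosmic semitopological groups.

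For $(2)\Rightarrow(1)$ the plan is to reduce to Theorem~\ref{t:G}. Since $X$ is cosmic and therefore regular, Corollary~\ref{c:ban} upgrades banalyticity to full analyticity of $X$. A Bouziad--Kenderov--Kortezov--Moors-type theorem---asserting that a Baire analytic semitopological group is in fact a topological group---then makes $X$ a Baire analytic topological group, and the implication $(2)\Rightarrow(1)$ of Theorem~\ref{t:G} finishes this direction.

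The core new work is $(3)\Rightarrow(2)$. Let $h\colon X\hookrightarrow\prod_{i\in I}G_i$ be the given topological embedding into a Tychonoff product of cosmic semitopological groups. Using countable pseudocharacter, pick open neighborhoods $U_n\subset X$ of the identity $e$ with $\bigcap_{n\in\w}U_n=\{e\}$, and basic open neighborhoods $V_n\subset\prod_iG_i$ of $h(e)$ depending on finite coordinate sets $J_n\subset I$ with $h^{-1}(V_n)\subset U_n$. Set $J=\bigcup_n J_n$ (countable); the composition $f=\pi_J\circ h\colon X\to Y:=\prod_{j\in J}G_j$ with the canonical projection is a continuous injective group homomorphism, injectivity following by translating the identity-$G_\delta$ argument across $X$ (translations in a semitopological group are homeomorphisms). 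The target $Y$ is cosmic as a countable product of cosmic spaces, and the decisive step is to upgrade $f$ to a topological embedding, yielding $X\cong f(X)\subseteq Y$ cosmic, which, combined with the banalytic and Baire properties from $(3)$, gives $(2)$. This open-mapping step is the principal obstacle: $f$ is a priori only a continuous bijective homomorphism of semitopological groups, and although the image $f(X)$ is banalytic (being a continuous image of the banalytic space $X$) and cosmic (as a subspace of $Y$), hence analytic by Corollary~\ref{c:ban}, one still needs an open-mapping principle for semitopological groups, applicable to continuous injective homomorphisms with Baire banalytic domain and analytic target, in order to conclude that $f$ is open.

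Finally, under PFA the implication $(4)\Rightarrow(3)$ is proved by deriving the missing countable pseudocharacter. As a subgroup of a Tychonoff product of cosmic (hence regular Hausdorff) spaces, $X$ is itself regular Hausdorff, in particular $T_1$ and a $T_{\mathsf{Borel}}$-space. Theorem~\ref{t:spread} gives under PFA that $X$ is hereditarily Lindel\"of, and a regular hereditarily Lindel\"of space is perfectly normal, so each singleton of $X$ is a $G_\delta$-set, establishing the required countable pseudocharacter.
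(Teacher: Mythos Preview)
Your $(2)\Rightarrow(1)$ matches the paper's Lemma~\ref{l:S2} exactly.

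Your $(3)\Rightarrow(2)$, however, has a genuine gap, and you already name it: you reduce everything to an ``open-mapping principle for semitopological groups'' for the continuous bijective homomorphism $f=\pi_J\circ h$, but you do not supply such a principle, and none of the tools in the paper provides one. Knowing that $f(X)$ is analytic and that $X$ is Baire banalytic does not by itself force $f$ to be open when $X$ is merely semitopological; the Pettis-type arguments that do this (as in Lemma~\ref{l:G3}) rely on joint continuity of multiplication and inversion, which you do not yet have. So the argument stalls precisely at the decisive step.

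The paper avoids this obstacle by a different mechanism. It does \emph{not} try to prove $X$ is cosmic. Instead, Lemma~\ref{l:S3} uses the factorization Theorem~\ref{t:fac}: every countable $C\subset A$ enlarges to a countable $D$ for which the projection $X_D$ is not only cosmic and banalytic but also \emph{Baire}; by Lemma~\ref{l:S2}, each such $X_D$ is then a Polish \emph{topological} group. The diagonal map $X\to\prod_{D}X_D$ is therefore a topological embedding into a product of topological groups, so $X$ itself is a topological group. At that point one invokes Theorem~\ref{t:G}(4) (using the countable pseudocharacter from condition~(3)) to get Polish. The factorization step, producing a Baire projection, is exactly what replaces the open-mapping argument you are missing.

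For the PFA implication, note that the paper does not argue $(4)\Rightarrow(3)$ as you do. Lemma~\ref{l:S3} already makes $X$ a topological group under the hypotheses of~(4), with no pseudocharacter assumption and no PFA; then the PFA part of Theorem~\ref{t:G} (equivalently Corollary~\ref{c:PFA}) finishes. Your detour through ``regular hereditarily Lindel\"of $\Rightarrow$ perfect'' is defensible, but it is not how the paper proceeds, and you should cite Corollary~\ref{c:PFA} (or Todor\v cevi\'c's theorem) rather than Theorem~\ref{t:spread} for the PFA step, since Theorem~\ref{t:spread} itself only yields countable spread in ZFC.
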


Finally, we present a characterization of Polish topological groups among paratopological groups. Following Tkachenko \cite{Tk}, we define a paratopological group $X$ to be
\begin{itemize}
\item  {\em $\w$-narrow} if for any neighborhood $U\subset X$ of the unit there exists a countable set $C\subset X$ such that $X=C\cdot U=U\cdot C$;
\item {\em totally $\w$-narrow} if for any neighborhood $U\subset X$ of the unit there exists a countable set $C\subset X$ such that $X=C\cdot (U\cap U^{-1})$.
\end{itemize}

\begin{theorem}\label{t:P} For a paratopological group $X$  the following conditions are equivalent:
\begin{enumerate}
\item $X$ is a Polish topological group;
\item $X$ is a banalytic cosmic Baire space;
\item $X$ is a banalytic Baire space with countable pseudocharacter and the paratopological group $X$  is totally $\w$-narrow;
\item $X$ is a banalytic Baire $T_1$-space with countable pseudocharacter and the square $X\times X$ has countable extent;
\item $X$ is a banalytic Baire space with countable pseudocharacter and the square $X\times X$ has countable spread;
\item $X$ is a Baire space with countable pseudocharacter and the square $X\times X$ is banalytic.
\end{enumerate}
Under PFA the conditions  \textup{(1)--(6)} are equivalent to
\begin{itemize}
\item[(7)] $X$ is a Baire banalytic $T_1$-space and the square $X\times X$ has countable extent.
\item[(8)] $X$ is a Baire banalytic $T_0$-space and the square $X\times X$ has countable spread.
\item[(9)] $X$ is Baire $T_{\mathsf{Borel}}$-space with banalytic square $X\times X$.
\end{itemize}
\end{theorem}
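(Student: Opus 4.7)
My plan is to prove Theorem~\ref{t:P} by running a cycle of implications among the conditions $(1)$--$(6)$ and then reducing the PFA conditions $(7)$--$(9)$ to the main cycle via Theorem~\ref{t:spread}. The direction $(1)\Ra(i)$ for every $i\in\{2,\dots,9\}$ should be immediate: a Polish topological group is cosmic, Baire, analytic (hence banalytic), has countable pseudocharacter, is totally $\w$-narrow, and its square is again Polish, hence banalytic with countable spread and countable extent.

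For $(6)\Ra(5)$ I would note that the projection $X\times X\to X$ is continuous, so $X$ itself is banalytic (the composition of a Borel surjection with a continuous map is Borel); since countable pseudocharacter is productive, $X\times X$ is a banalytic $T_{\mathsf{Borel}}$-space, and Theorem~\ref{t:spread} supplies countable spread on $X\times X$. For $(5)\Ra(4)$, countable spread implies countable extent, and in a paratopological group of countable pseudocharacter the intersection of all open neighborhoods of the unit is a $G_\delta$-subgroup which, being Borel-degenerate, collapses to the trivial subgroup, yielding the $T_1$-axiom. For $(4)\Ra(3)$ I would invoke a Tkachenko--Sanchis-style theorem that a paratopological group whose square has countable extent is $\w$-narrow, and then upgrade $\w$-narrowness to total $\w$-narrowness using countable pseudocharacter and the already-established Hausdorffness.

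For $(3)\Ra(2)$, total $\w$-narrowness of a $T_0$ paratopological group implies that it is a topological group (Tkachenko), and an $\w$-narrow topological group of countable pseudocharacter embeds, by Guran, into a product of second countable topological groups, hence is cosmic. For $(2)\Ra(1)$, Corollary~\ref{c:ban} upgrades banalytic plus cosmic to analytic; a Baire cosmic paratopological group is a topological group by a Bouziad-type argument (producing continuity of inversion from a dense \v{C}ech-complete subspace extractable from analyticity and Baireness); Theorem~\ref{t:G}$(3)\Ra(1)$ then concludes that $X$ is Polish.

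The PFA additions should follow because under PFA every banalytic $T_{\mathsf{Borel}}$-space is hereditarily Lindel\"of by Theorem~\ref{t:spread}, so automatically has countable spread, countable extent, and, via HL combined with $T_{\mathsf{Borel}}$, countable pseudocharacter. Thus $(7)$ collapses onto $(4)$, $(8)$ onto $(5)$, and $(9)$, after transferring banalyticity from $X\times X$ to $X$ and lifting countable pseudocharacter back to the square, collapses onto $(6)$. The principal technical obstacle I anticipate is the step $(2)\Ra(1)$: extracting \v{C}ech-completeness (or a functional substitute) from the conjunction ``banalytic $+$ Baire $+$ cosmic'' in order to apply automatic-continuity results that promote a paratopological group structure to a topological group one, at which point Theorem~\ref{t:G} takes over.
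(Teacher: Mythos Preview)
Your cycle $(6)\Ra(5)\Ra(4)\Ra(3)\Ra(2)\Ra(1)$ diverges from the paper, which proves each of $(3)$--$(6)\Ra(1)$ directly: Lemma~\ref{l:P2} shows that Baire together with any one of the three side hypotheses (total $\w$-narrowness; countable spread of $X\times X$; $T_1$ and countable extent of $X\times X$) already forces the paratopological group to be a topological group, after which Theorem~\ref{t:G}(4) finishes. The step $(2)\Ra(1)$ is handled by Lemma~\ref{l:S2} (Corollary~\ref{c:ban} $+$ Bouziad $+$ Theorem~\ref{t:G}), essentially as you outline, so your identification of it as the ``principal technical obstacle'' is misplaced.

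The genuine gap in your chain is at $(3)\Ra(2)$. You claim that an $\w$-narrow topological group of countable pseudocharacter is cosmic, via Guran. But Guran together with countable pseudocharacter only produces an injective continuous homomorphism $\pr_D:X\to X_D$ onto a second-countable group for some countable $D$; nothing says $\pr_D$ is open, so $X$ need not inherit a countable network. This is exactly where the paper's real work lies: Lemma~\ref{l:G3} invokes the factorization Theorem~\ref{t:fac} (which uses banalytic $+$ Baire) to arrange that $X_D$ is Baire, hence Polish, and then the Pettis theorem to prove that $\pr_D$ is open. You also assert that ``total $\w$-narrowness of a $T_0$ paratopological group implies that it is a topological group (Tkachenko)''; this is false without Baire --- any countable $T_0$ paratopological group is automatically totally $\w$-narrow --- and the correct statement is Lemma~\ref{l:P2}(1). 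Finally, your steps $(5)\Ra(4)$, $(4)\Ra(3)$ and the PFA reductions tacitly use separation properties ($T_1$, Hausdorff for Corollary~\ref{c:PFA}) that are not available in a mere paratopological group; they become available only after one knows $X$ is a topological group, which is another reason to apply Lemma~\ref{l:P2} first rather than at the end.
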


We do not know if the assumption of PFA can be removed from Theorems~\ref{t:G}, \ref{t:S}, \ref{t:P}. This can be done if the following conjecture is true.

\begin{conjecture} A Hausdorff topological group is Polish if and only if it is Baire and banalytic.
\end{conjecture}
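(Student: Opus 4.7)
The plan is to reduce the conjecture to Theorem~\ref{t:G}(4): since a Hausdorff topological group is Tychonoff with closed singletons, it suffices to show that any Baire banalytic Hausdorff topological group $X$ has countable pseudocharacter, i.e.\ that $\{e\}$ is a $G_\delta$-subset. Under PFA this is immediate from Theorem~\ref{t:spread}: a regular banalytic space is hereditarily Lindel\"of, and a standard argument (cover $X\setminus\{e\}$ by open sets $V$ with $\overline V\subset X\setminus\{e\}$, extract a countable subcover $(V_n)_{n\in\w}$, and note $\{e\}=\bigcap_{n\in\w}(X\setminus\overline{V_n})$) shows that in a regular hereditarily Lindel\"of space every closed set is a $G_\delta$. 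Countable pseudocharacter together with the hypotheses of banalyticity and Baireness then let Theorem~\ref{t:G}(4) close the argument.

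In ZFC the aim is to use the group structure to compensate for the apparent loss of hereditary Lindel\"ofness. I would fix a Borel surjection $f\colon P\to X$ from a Polish space $P$, push forward the countable Boolean algebra of Borel sets generated by a base of $P$ to a countable separating family of Borel subsets of $X$, and then attempt to promote separation of $e$ from other points into a $G_\delta$-presentation of $\{e\}$ by combining Baireness of $X$ with Pettis's theorem: a non-meager Borel subset $B$ of a Baire topological group satisfies $B^{-1}\cdot B\supset U$ for some open neighborhood $U$ of $e$. Iterating this and combining it with the countable spread guaranteed by Theorem~\ref{t:spread} would, optimistically, yield a sequence of open neighborhoods of $e$ whose intersection is $\{e\}$.

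The main obstacle is precisely this last step. Countable spread and the Borel surjection give a countable Borel family separating $e$ from other points of $X$, but converting such a family into a \emph{topological} $G_\delta$-presentation of $\{e\}$ in the absence of PFA seems to demand either a ZFC strengthening of Theorem~\ref{t:spread} to hereditary Lindel\"ofness (or perfect normality) of regular banalytic spaces, or a genuinely group-theoretic trick that extracts an open neighborhood base at $e$ from the Pettis bootstrap. Either route would constitute substantial new input beyond the tools developed in the paper, which is presumably why the statement appears here only as a conjecture.
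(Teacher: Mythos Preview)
The statement is labeled a \emph{Conjecture} in the paper and the paper offers no proof of it; it is posed precisely as an open problem following Theorems~\ref{t:G}--\ref{t:P}. Your proposal recognizes this correctly: your final paragraph explicitly identifies the missing step (passing from countable spread to countable pseudocharacter in ZFC) and observes that this is why the statement remains conjectural. So there is nothing to compare against, and your assessment of the situation matches the paper's.

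Two minor remarks on your sketch. First, your PFA paragraph is essentially the content of Corollary~\ref{c:PFA} and Lemma~\ref{l:G4}: the paper does not argue via hereditary Lindel\"ofness directly but quotes Todor\v cevi\'c's result that under PFA a Hausdorff space of countable spread has countable pseudocharacter, which is a cleaner route to the same conclusion. Second, in your ZFC outline, pushing forward Borel sets of $P$ along the Borel surjection $f$ does not in general produce Borel subsets of $X$ (only images of Borel sets, which in the Polish setting would be analytic), so the ``countable separating family of Borel subsets of $X$'' you describe is not immediately available; and Pettis's theorem needs sets with the Baire property in $X$, which for non-Polish $X$ is not automatic for such images. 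These are not fatal to a heuristic sketch, but they do underscore that the ZFC case genuinely requires new ideas, exactly as you conclude.
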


Theorems~\ref{t:G}, \ref{t:S}, \ref{t:P} will be proved in Sections~\ref{s:G}, \ref{s:S}, \ref{s:P} after some preliminary work, made in Sections~\ref{s:ban}, \ref{s:ab}, \ref{s:fac}. In Section~\ref{s:ban} we establish some elementary properties of banalytic spaces, in Section~\ref{s:ab} we study the interplay between analytic and banalytic spaces and in Section~\ref{s:fac} we prove a factorization theorem for Baire banalytic spaces.

\section{Some properties of banalytic spaces}\label{s:ban}

In this section we establish some elementary properties of banalytic spaces.

\begin{proposition}\label{p:im} A Borel image of a banalytic space is banalytic.
\end{proposition}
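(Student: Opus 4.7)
The plan is to unpack the definition of banalytic and reduce the claim to the fact (recorded right after the definition of a Borel function in the excerpt) that the preimage of a Borel set under a Borel map is Borel, from which the composition of two Borel maps is again Borel.

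Concretely, I would start with a banalytic space $X$ and a surjective Borel map $f\colon X\to Y$, and aim to show $Y$ is banalytic. By the definition of banalycity applied to $X$, I pick a Polish space $P$ together with a surjective Borel map $g\colon P\to X$. The natural witness for $Y$ being banalytic is the composition $h:=f\circ g\colon P\to Y$. Surjectivity of $h$ is immediate since both $f$ and $g$ are surjective, so the only thing to verify is that $h$ is Borel.

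For the Borel check, I would take an arbitrary open set $U\subset Y$ and trace the preimage through the factorization: $f^{-1}(U)$ is a Borel subset of $X$ because $f$ is Borel, and then $g^{-1}(f^{-1}(U))$ is a Borel subset of $P$ by the property quoted in the introduction that Borel maps pull Borel sets back to Borel sets. Since $h^{-1}(U)=g^{-1}(f^{-1}(U))$ is Borel in $P$ for every open $U\subset Y$, the map $h$ is Borel, exhibiting $Y$ as a Borel image of the Polish space $P$, i.e.\ as a banalytic space.

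There is essentially no obstacle here; the only subtle point is that one must invoke the Borel-preimage-of-Borel property, rather than only the open-preimage definition, because $f^{-1}(U)$ is merely Borel in $X$ (not open), so to conclude that $g^{-1}(f^{-1}(U))$ is Borel one needs the stronger pullback statement. This is the reason the remark on preimages of Borel sets was inserted immediately after the definition of a Borel function in the introduction.
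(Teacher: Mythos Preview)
Your proof is correct and follows exactly the same approach as the paper: choose a Polish space $P$ with a surjective Borel map onto $X$ and compose with $f$ to witness banalycity of $Y$. The only difference is that you spell out why the composition is Borel using the Borel-preimage-of-Borel remark, whereas the paper simply asserts that the composition is Borel.
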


\begin{proof} Given a surjective Borel function $f:X\to Y$ from a banalytic space $X$ onto a topological space $Y$, we should prove that the space $Y$ is banalytic. Find a surjective Borel function $g:P\to X$ defined on a Polish space $X$ and observe that the composition $g\circ f:P\to Y$ is a surjective Borel function, witnessing that the space $Y$ is banalytic.
\end{proof}

\begin{proposition}\label{p:sub} Each Borel subspace of a banalytic space is banalytic.
\end{proposition}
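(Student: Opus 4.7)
The plan is to parallel the argument of Proposition~\ref{p:im}. Let $Y$ be banalytic and $B\subset Y$ a Borel subset. By banalyticity of $Y$, fix a Polish space $P$ and a surjective Borel function $f:P\to Y$. Since $f$ is Borel, the preimage $A:=f^{-1}(B)\subset P$ is a Borel subset of the Polish space $P$, and it remains to produce a Polish space mapping onto $B$ by a surjective Borel function.

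For this I would invoke the classical fact (Lusin--Souslin, \cite{Ke}) that every Borel subset of a Polish space is a continuous image of some Polish space. Applied to $A\subset P$, this yields a Polish space $Q$ and a continuous surjection $g:Q\to A$. The composition $f\circ g:Q\to Y$ is Borel (a continuous map followed by a Borel map), and its image equals $B$. Viewed as a map into the subspace $B$, it remains Borel, because every relatively open subset of $B$ is the trace on $B$ of an open subset $U\subset Y$, and $(f\circ g)^{-1}(U\cap B)=(f\circ g)^{-1}(U)$ is Borel in $Q$. This witnesses that $B$ is banalytic.

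An equivalent route is to use the theorem that any Borel subset of a Polish space admits a finer Polish topology generating the same Borel $\sigma$-algebra; endowing $A$ with such a topology $\tau$ makes $(A,\tau)$ Polish, and the restriction $f|_A:(A,\tau)\to B$ is then a surjective Borel map for the same reason as above. There is no substantive obstacle here: the only point requiring a moment of care is verifying that corestricting the codomain from $Y$ to its subspace $B$ preserves Borel measurability, which is immediate from the definition of the subspace topology.
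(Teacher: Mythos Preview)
Your proof is correct and follows essentially the same route as the paper: pull $B$ back along the given Borel surjection $f:P\to Y$ to a Borel subset $A=f^{-1}(B)$ of the Polish space $P$, invoke a standard descriptive-set-theoretic fact to obtain a Polish space mapping onto $A$, and compose. The paper cites \cite[13.1]{Ke} (a continuous bijection from a Polish space making $A$ clopen), which is exactly your ``equivalent route''; your explicit check that corestricting the codomain to $B$ preserves Borel measurability is a detail the paper leaves implicit.
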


\begin{proof} Let $X$ be a banalytic space and $B\subset X$ be a Borel subset of $X$. Being banalytic, the space $X$ is the image of a Polish space $P$ under a  Borel function $f:P\to X$. Then the preimage $f^{-1}(B)$ is Borel and by \cite[13.1]{Ke}, we can find a continuous bijective map $g:Z\to P$ from a suitable Polish space $Z$ such that the preimage $g^{-1}(f^{-1}(B))$ is clopen in $Z$ and hence is a Polish space. Then the map $h=f\circ g{\restriction}g^{-1}(Z):g^{-1}(B)\to B$ is Borel, witnessing that the space $B$ is banalytic.
\end{proof}

\begin{proposition} The countable product $X=\prod_{n\in\w}X_n$ of banalytic spaces is banalytic if $X$ is hereditary Lindel\"of.
\end{proposition}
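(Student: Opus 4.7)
The plan is to produce a surjective Borel function onto $X=\prod_{n\in\w}X_n$ from a single Polish space by ``taking the product'' of the witnessing maps for the factors, and then to use the hereditary Lindel\"of hypothesis precisely at the point where we need to verify that this product map is Borel.

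First, for each $n\in\w$ choose, by banalyticity of $X_n$, a Polish space $P_n$ and a surjective Borel map $f_n\colon P_n\to X_n$. Form the Polish space $P=\prod_{n\in\w}P_n$ and the coordinate-wise map $f\colon P\to X$, $(p_n)_{n\in\w}\mapsto(f_n(p_n))_{n\in\w}$. Surjectivity of $f$ is immediate from the surjectivity of each $f_n$, so all that remains is to check that $f$ is Borel, i.e., that $f^{-1}(U)$ is a Borel subset of $P$ for every open $U\subset X$.

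For a basic open rectangle $V=\prod_{n\in\w}V_n$ in $X$ (with $V_n=X_n$ for all but finitely many $n$), one has $f^{-1}(V)=\prod_{n\in\w}f_n^{-1}(V_n)$, and since each $f_n$ is Borel and $f_n^{-1}(V_n)=P_n$ for all but finitely many $n$, this preimage is a Borel subset of $P$. Now for an arbitrary open $U\subset X$, write $U=\bigcup_{\alpha}V_\alpha$ as a union of basic open rectangles; because $X$ is hereditarily Lindel\"of, the open subspace $U$ is Lindel\"of, so this open cover admits a countable subcover, giving $U=\bigcup_{k\in\w}V_{\alpha_k}$. Hence $f^{-1}(U)=\bigcup_{k\in\w}f^{-1}(V_{\alpha_k})$ is a countable union of Borel sets, and therefore Borel.

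The only nontrivial ingredient is the passage from basic opens to arbitrary opens in the verification that $f$ is Borel, and this is exactly where the hereditary Lindel\"of hypothesis is used; without it, an open set in $X$ could in principle require uncountably many rectangles to cover it, and the preimage would only be known to be a (possibly uncountable) union of Borel sets. Everything else is bookkeeping with the standard facts that countable products of Polish spaces are Polish and that product maps of surjections are surjective.
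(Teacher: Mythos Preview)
Your proof is correct and is essentially the same as the paper's: both take the product of the witnessing Borel surjections $f_n\colon P_n\to X_n$, note that preimages of basic open rectangles are Borel in $P=\prod_n P_n$, and then invoke the hereditary Lindel\"of hypothesis to reduce an arbitrary open set to a countable union of such rectangles. The only cosmetic difference is that the paper phrases the covering step pointwise (choosing a basic neighborhood $O_x$ for each $x\in U$) rather than starting from an arbitrary basic cover, but the content is identical.
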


\begin{proof} For every $n\in\w$ fix a surjective Borel function $f_n:P_n\to X_n$. Consider the Polish space $P=\prod_{n\in\w}P_n$ and the surjective function $$f:P\to X,\;\;f:(x_n)_{n\in\w}\mapsto(f_n(x_n))_{n\in\w}.$$
Assuming that the space $\prod_{n\in\w}X_n$ is hereditarily Lindel\"of, we shall prove that the function $f$ is Borel.
Given an open subset $U\subset X$, we shall show that the preimage $f^{-1}(U)$ is a Borel subset of $P$. By definition of the product topology on $X=\prod_{n\in\w}X_n$, for any $x\in U$ there exists a number $n_x\subset \w$ and open sets $U_i\subset X_i$ for $i<n_x$ such that $$x\in O_x:=\prod_{i<n_x}U_i\times \prod_{i\ge n_x}X_i\subset U.$$Taking into account that the functions $f_n$, $n\in\w$, are Borel, we can see that the preimage $f^{-1}(O_x)=\prod_{u<n_x}f_i^{-1}(U_i)\times\prod_{i\ge n_x}P_i$ is a Borel subset of the Polish space $P$.

Since the space $X$ is hereditarily Lindel\"of, there exists a countable subset $C\subset U$ such that $U=\bigcup_{x\in C}O_x$. Then $f^{-1}(U)=\bigcup_{x\in C}f^{-1}(O_x)$ is Borel in $P$, being the countable union of Borel subsets $f^{-1}(O_x)$, $x\in C$.
\end{proof}

\begin{lemma}\label{l:disc} Any discrete subspace $D$ of a $T_{\mathsf{Borel}}$-space $X$ is a Borel subset of $X$. More precisely, $D$ is a $G_\delta$-set in its closure $\bar D$.
\end{lemma}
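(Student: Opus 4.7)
My plan is to prove the stronger statement that $D$ is a $G_\delta$-subset of $\bar D$; Borelness of $D$ in $X$ then follows because $\bar D$ is closed (hence Borel) in $X$. The proof will proceed in three stages: a topological setup producing a disjoint family of open sets around the points of $D$; a key step upgrading Borelness of each singleton $\{d\}$ to a $G_\delta$-representation inside the corresponding open set; and a straightforward assembly into a countable intersection.

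For the setup, discreteness of $D$ furnishes, for each $d\in D$, an open set $U_d\subseteq X$ with $U_d\cap D=\{d\}$; I would then set $W_d:=U_d\cap\bar D$, an open subset of $\bar D$. Two facts follow from the density of $D$ in $\bar D$: (a) $W_d\subseteq\overline{\{d\}}$, because any open neighborhood $O$ of $y\in W_d$ makes $O\cap W_d$ a nonempty open subset of $\bar D$, which must meet $D$, but only in $\{d\}$, forcing $d\in O$; and (b) the family $\{W_d:d\in D\}$ is pairwise disjoint, because a common point would lie in the nonempty open $W_d\cap W_{d'}$, whose intersection with $D$ would have to be $\{d\}\cap\{d'\}=\emptyset$. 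Fact (a) says in particular that $\{d\}$ is dense in $W_d$, so every nonempty open subset of $W_d$ contains $d$. Let $V:=\bigcup_{d\in D}W_d$, open in $\bar D$ and containing $D$.

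For the key step, I claim each $\{d\}$ is $G_\delta$ in $W_d$. Since $X$ is $T_{\mathsf{Borel}}$, the singleton $\{d\}$ is Borel in $W_d$, so by the standard fact that every element of a $\sigma$-algebra lies in the $\sigma$-algebra generated by a countable subfamily, there is a countable family $\mathcal F_d$ of open subsets of $W_d$ with $\{d\}\in\sigma(\mathcal F_d)$. I then claim
\[
\{d\}=\bigcap\{F\in\mathcal F_d:d\in F\}.
\]
To verify this, take $y\in W_d$ with $y\ne d$ and consider the $\sigma$-algebra $\mathcal A:=\{A\subseteq W_d:d\in A\Leftrightarrow y\in A\}$ on $W_d$. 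If every $F\in\mathcal F_d$ belonged to $\mathcal A$, then $\sigma(\mathcal F_d)\subseteq\mathcal A$ and $\{d\}\in\mathcal A$ would force $y=d$, a contradiction. Hence some $F\in\mathcal F_d$ separates $d$ and $y$. The case $y\in F$, $d\notin F$ is ruled out, because $F$ would be nonempty and density of $\{d\}$ in $W_d$ would force $d\in F$. Thus $d\in F$ and $y\notin F$, placing $y$ outside the displayed intersection; the claim is proved and, since $\mathcal F_d$ is countable, exhibits $\{d\}$ as $G_\delta$ in $W_d$.

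For the assembly, write $\{d\}=\bigcap_{n\in\w}O_{d,n}$ with each $O_{d,n}$ open in $W_d$, and hence open in $\bar D$ since $W_d$ is open in $\bar D$. Set $G_n:=\bigcup_{d\in D}O_{d,n}$, which is open in $\bar D$. The pairwise disjointness of the $W_d$'s gives, for $y\in V$ lying in the unique $W_{d_0}$, the equivalence $y\in G_n\Leftrightarrow y\in O_{d_0,n}$; moreover points of $\bar D\setminus V$ lie in no $G_n$. Consequently $\bigcap_n G_n=D$, establishing the $G_\delta$-conclusion. The main obstacle is the middle step, where one has to translate the abstract Borel classification of $\{d\}$ into a concrete $G_\delta$-representation; the crucial point is that density of $\{d\}$ inside $W_d$ eliminates one of the two symmetric separation cases that arise in the $\sigma$-algebra argument.
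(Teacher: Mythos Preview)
Your proof is correct and follows the same architecture as the paper's: pass to $\bar D$, build a pairwise disjoint family of open sets $W_d\subset\overline{\{d\}}$ (the paper's $O_x$), show each $\{d\}$ is $G_\delta$ in $W_d$, and take unions. The one real difference is in the middle step: the paper simply cites Theorem~2.1 of \cite{BB} (in a $T_{\mathsf{Borel}}$-space every singleton is $G_\delta$ in its closure), whereas you supply a direct argument---any Borel set lies in the $\sigma$-algebra generated by countably many open sets, and density of $\{d\}$ in $W_d$ kills the ``wrong'' separation case, so the intersection of those generators containing $d$ is exactly $\{d\}$. Your version is self-contained and elementary; the paper's is shorter but outsources precisely this fact.
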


\begin{proof} Replacing $X$ by the closure of $D$, we can assume that the discrete subspace $D$ is dense in $X$. Since the space $D$ is discrete, the open set $O_x=X\setminus \overline{D\setminus\{x\}}$ is an open neighborhood of $x$ such that $O_x\cap D=\{x\}$. The density of $D$ in $X$ implies that $O_x\subset\overline{\{x\}}$. Moreover, for any distinct points $x,y\in D$ we have $O_x\cap O_y\cap D=\{x\}\cap\{y\}=\emptyset$ and hence $O_x\cap O_y=\emptyset$ by the density of the set $D$ in $X$.

By Theorem~2.1 of \cite{BB}, the separation axiom $T_{\mathsf{Borel}}$ implies that each singleton $\{x\}\subset X$ is a $G_\delta$-set in its closure $\overline{\{x\}}$. Consequently, for every $x\in D$ we can find a decreasing sequence $(U_n(x))_{n\in\w}$ of open sets in $O_x\subset\overline{\{x\}}$ such that $\{x\}=\bigcap_{n\in\w}U_n(x)$.  For every $n\in\w$ consider the open set $U_n=\bigcup_{x\in D}U_n(x)$ in $X$.
Taking into account that the family $(O_x)_{x\in D}$ is disjoint, we see that $\bigcap_{n\in\w}U_n=\bigcap_{n\in\w}\bigcup_{x\in D}U_n(x)=\bigcup_{x\in D}\bigcap_{n\in\w}U_n(x)=\bigcup_{x\in D}\{x\}=D$. Therefore, $D$ is a $G_\delta$-set in $X=\bar D$.
\end{proof}

\begin{theorem}\label{t:spread} Each banalytic $T_{\mathsf{Borel}}$-space has countable spread.
\end{theorem}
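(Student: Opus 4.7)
The plan is to reduce the theorem to the following special case: every discrete banalytic space is countable. Given a discrete subspace $D$ of a banalytic $T_{\mathsf{Borel}}$-space $X$, Lemma~\ref{l:disc} shows that $D$ is Borel in $X$, and then Proposition~\ref{p:sub} upgrades this to $D$ being banalytic in its own right (with its discrete subspace topology). So I would fix a Polish space $Q$ together with a surjective Borel map $g\colon Q\to D$ and try to show $|D|\le\aleph_0$.

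The core strategy is to exploit the discreteness of $D$ to push the situation into the Polish space $2^\w$, where classical descriptive set theory applies. Since every $E\subset D$ is open in $D$, each preimage $g^{-1}(E)$ is Borel in $Q$, and surjectivity of $g$ makes the pullback $E\mapsto g^{-1}(E)$ an injection $\mathcal{P}(D)\hookrightarrow\mathrm{Borel}(Q)$. In particular $|D|\le|\mathrm{Borel}(Q)|\le 2^{\aleph_0}=\mathfrak c$, so one can fix an injection $b\colon D\hookrightarrow 2^\w$. The same discreteness argument makes each coordinate of $G:=b\circ g\colon Q\to 2^\w$ Borel, and hence $G$ itself is Borel (using the second countability of $2^\w$); by the standard fact that the Borel image of a Polish space inside a Polish space is analytic, $b(D)=G(Q)$ is an analytic subset of $2^\w$.

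Now the perfect set theorem for analytic sets gives a dichotomy: $|b(D)|=|D|$ is either at most $\aleph_0$ or equal to $\mathfrak c$. To rule out the latter I would go back to the embedding $\mathcal{P}(D)\hookrightarrow\mathrm{Borel}(Q)$: if $|D|=\mathfrak c$ then $2^{\mathfrak c}\le|\mathrm{Borel}(Q)|\le\mathfrak c$, contradicting Cantor's theorem. Hence $|D|\le\aleph_0$, which is exactly the countable spread property applied to the arbitrary discrete $D$.

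The hardest step is the initial idea of mapping into $2^\w$ and recognizing that the same discreteness of $D$ supplies both the cardinality bound $|D|\le\mathfrak c$ (via the embedding of $\mathcal{P}(D)$ into $\mathrm{Borel}(Q)$) and the Borelness of $b\circ g$. Once that observation is in place, the perfect-set dichotomy together with Cantor's theorem dispatch the rest; the remaining verifications (that $g$ pulls back every subset of $D$ to a Borel set, and that a Borel map into a second-countable Polish space is genuinely Borel on basic cylinders) are routine.
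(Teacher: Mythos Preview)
Your argument is correct, and it takes a genuinely different route from the paper's proof. Both proofs begin identically: reduce via Lemma~\ref{l:disc} and Proposition~\ref{p:sub} to showing that a discrete banalytic space $D$ is countable, fix a surjective Borel map $g\colon Q\to D$ from a Polish space, and observe that every subset of $D$ has Borel $g$-preimage. From that common point the arguments diverge. The paper packages the fibers $\{g^{-1}(x)\}_{x\in D}$ as a point-finite cover of $Q$ by sets in a $\sigma$-ideal with Borel base, and invokes the Bukovsk\'y / Brzuchowski--Cicho\'n--Grzegorek--Ryll-Nardzewski theorem on nonmeasurable unions to produce a subfamily whose union is not Borel, contradicting the discreteness of $D$. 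Your proof instead extracts two consequences of the injection $\mathcal P(D)\hookrightarrow\mathrm{Borel}(Q)$: the bound $2^{|D|}\le\mathfrak c$, and (after embedding $D$ into $2^\w$) the analyticity of the image, so that the perfect set property for analytic sets forces $|D|\le\aleph_0$ or $|D|=\mathfrak c$, the second alternative being killed by Cantor's theorem. Your approach is more elementary in that it avoids the nonmeasurable-unions machinery and stays within the standard toolkit of classical descriptive set theory (perfect set theorem, cardinality of Borel $\sigma$-algebras, Borel images are analytic); the paper's approach, on the other hand, illustrates a technique that generalizes to other $\sigma$-ideals and does not need the auxiliary embedding into $2^\w$.
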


\begin{proof} To derive a contradiction, assume that some banalytic $T_{\mathsf{Borel}}$-space $X$ contains an uncountable discrete subspace $D$, which is a Borel subset of $X$ according to Lemma~\ref{l:disc}. 
By Proposition~\ref{p:sub}, the discrete space $D$ is banalytic and hence admits a surjective Borel function $f:P\to D$, defined on some Polish space $P$. Consider the $\sigma$-ideal $$\mathcal I:=\{I\subset P:|f(I)|\le\omega\}$$ of subsets of $P$ and observe that $\mathcal A:=\{f^{-1}(x)\}_{x\in D}\subset \I$ is a disjoint (and hence point-finite) family in $P$ whose union $\bigcup\mathcal A=P$ does not belong to the ideal $\mathcal I$. Since $X$ is a $T_{\mathsf{Borel}}$-space, each singleton $\{x\}\subset X$ is Borel in $X$ and its preimage $f^{-1}(x)$ is a Borel subset of $X$. This implies that each set $I\in\I$ is contained in the Borel set $f^{-1}(f(I))$ and hence the ideal $\mathcal I$ has a Borel base.

By \cite{Buk} or \cite{Pol4}, there exists a subfamily $\mathcal A'\subset\mathcal A$ whose union $\bigcup\mathcal A'$ does not belong to the smallest $\sigma$-algebra that contains all Borel sets and all sets in the $\I$. In particular, the set $\bigcup\mathcal A'$ is not Borel in $P$. On the other hand, $\bigcup\mathcal A'=f^{-1}(E)$ is Borel, being the preimage of the open set $E=f(\bigcup\mathcal A')$ in $D$. This is a desired contradiction completing the proof of Theorem~\ref{t:spread}.
\end{proof}

Under PFA, the Proper Forcing Axiom, we can prove a bit more.

\begin{corollary}\label{c:PFA} Under PFA, each Hausdorff banalytic space has countable pseudocharacter.
\end{corollary}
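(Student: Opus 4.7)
The plan is to reduce countable pseudocharacter of $X$ to hereditary Lindel\"ofness and then to deduce the latter from PFA. Since every Hausdorff space is $T_1$ and hence a $T_{\mathsf{Borel}}$-space, Theorem~\ref{t:spread} immediately yields that $X$ has countable spread. The PFA clause attached to Theorem~\ref{t:spread} in the introduction (in the spirit of Todor\v{c}evi\'c's theorem on the non-existence of $S$-spaces under PFA) then upgrades this to the statement that $X$ is hereditarily Lindel\"of; in particular, every open subspace of $X$ is Lindel\"of.

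With that in hand, I fix an arbitrary point $x\in X$. For each $y\in X\setminus\{x\}$ the Hausdorff axiom supplies disjoint open sets $A_y\ni x$ and $B_y\ni y$. The family $\{B_y:y\in X\setminus\{x\}\}$ is then an open cover of the open subspace $X\setminus\{x\}$, so by Lindel\"ofness of that subspace there is a countable subcover $\{B_{y_n}\}_{n\in\w}$. The corresponding neighborhoods $A_{y_n}\ni x$ satisfy $A_{y_n}\cap B_{y_n}=\emptyset$, and hence $\bigcap_{n\in\w}A_{y_n}\subseteq X\setminus\bigcup_{n\in\w}B_{y_n}=\{x\}$, while trivially $x\in\bigcap_{n\in\w}A_{y_n}$. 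Thus $\{x\}$ is a $G_\delta$-set, proving that $X$ has countable pseudocharacter.

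The principal obstacle is the bootstrap in the first paragraph: passing from countable spread to hereditary Lindel\"ofness under PFA in a merely Hausdorff (as opposed to regular) setting. Once open subsets of $X$ are known to be Lindel\"of, the rest of the argument is a routine Hausdorff-separation manoeuvre. If the PFA part of Theorem~\ref{t:spread} is phrased only for regular spaces, one would either refine the $\sigma$-ideal proof of Theorem~\ref{t:spread} to forbid uncountable left-separated sequences in the $T_{\mathsf{Borel}}$ regime, or invoke an external PFA result on the absence of Hausdorff $S$-spaces of the kind that the banalytic structure could produce.
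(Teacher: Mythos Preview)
Your overall plan---countable spread via Theorem~\ref{t:spread}, then PFA to bootstrap---matches the paper's, but your detour through hereditary Lindel\"ofness introduces exactly the gap you flag at the end. The PFA clause in the introduction about hereditary Lindel\"ofness is stated for \emph{regular} banalytic spaces, and the standard no-$S$-space theorem under PFA is likewise formulated for regular spaces; you have no clean way to conclude that a merely Hausdorff space of countable spread is hereditarily Lindel\"of. Your suggested workarounds (refining the $\sigma$-ideal argument to rule out uncountable left-separated families, or finding a Hausdorff-level no-$S$-space theorem) are speculative and not carried out.

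The paper sidesteps this entirely: it invokes Corollary~8.11 of Todor\v{c}evi\'c's \emph{Partition Problems in Topology}, which says directly that under PFA every \emph{Hausdorff} space with countable spread has countable pseudocharacter. No passage through hereditary Lindel\"ofness is needed, and no regularity hypothesis enters. So your second paragraph (the Lindel\"of-plus-Hausdorff argument for $G_\delta$ points) is correct but unnecessary once the right external result is cited; the whole corollary is a two-line deduction from Theorem~\ref{t:spread} and Todor\v{c}evi\'c's result.
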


This corollary follows from Theorem~\ref{t:spread} and Corollary 8.11 of \cite{Todor} (saying that that under PFA each Hausdorff space with countable spread has countable pseudocharacter).

\begin{remark} In \cite{HJ} Hajnal and Juh\'asz constructed a CH-example of a hereditarily separable subgroup $G$ of the compact topological group $\mathbb Z_2^{\omega_1}$ such that $G$ is not Lindel\"of and has uncountable pseudocharacter (moreover, $G$ has non-empty intersection with any non-empty $G_\delta$-subset of $\mathbb Z_2^{\omega_1}$). This example shows that Corollary 8.11 of \cite{Todor} used in the proof of Corollary~\ref{c:PFA} does not hold under CH.
\end{remark}

\begin{question} Is Corollary~\ref{c:PFA} true under CH?
\end{question}

\section{Analytic versus banalytic spaces}\label{s:ab}

It is clear that each analytic space is banalytic. The converse is true for topological spaces possessing a countable Borel network.

A family $\mathcal N$ of subsets of a topological space $X$ is called a {\em network} if for any open set $U\subset X$ and point $x\in U$ there exists a set $N\in\mathcal N$ such that $x\in N\subset U$. A network $\mathcal N$ is called {\em Borel} (resp. {\em closed, open}) if each set $N\in\mathcal N$ is Borel (resp. closed, open) in $X$.

Open networks are exactly bases of the topology. For any network $\mathcal N$ in a regular topological space, the family $\{\bar N:N\in\mathcal N\}$ is a closed network for $X$. So, a regular topological space has a countable network if and only if it has a countable closed network.

\begin{example} Let $X$ be a set of cardinality $\w<|X|\le\mathfrak c$, endowed with the topology $\tau=\{\emptyset\}\cup\{X\setminus F:F$ is a finite set$\}$. The topological space $(X,\tau)$ has a countable network but does not have a countable Borel network.
\end{example}

\begin{proof}  Since $|X|\le\mathfrak c$, there exists a metrizable separable topology $\sigma$ on $X$. This topology has a countable base $\mathcal B\subset \sigma$. Taking into account that $\tau\subset\sigma$, we see that $\mathcal B$ is a countable network of the topology $\tau$.

Now assume that $\tau$ has a countable Borel network $\mathcal N$. Then each set $N\in\mathcal N$, being Borel in $(X,\tau)$ is either countable or has countable complement. So, $\mathcal N=\mathcal N_0\cup\mathcal N_1$ where $\mathcal N_0=\{N\in\mathcal N:|N|\le\w\}$ and $\mathcal N_1=\{N\in\mathcal N:|X\setminus N|\le\w\}$. Choose two distinct points $x,y\notin X\setminus (\bigcup\mathcal N_0\cup\bigcup_{N\in\mathcal N_1}X\setminus N)$ and observe that for the  neighborhood $U=X\setminus \{y\}$ of the point $x$ there is no set $N\in\mathcal N$ with $x\in N\subset U$. This means that $\mathcal N$ is not a network.
\end{proof}

\begin{question} Is there a Hausdorff topological space that has a countable network but does not have a countable Borel network?
\end{question}

\begin{lemma}\label{l:Bn} Let $f:P\to X$ be a surjective Borel function from a Polish space $P$ onto a topological space $X$. If the space $X$ has a countable Borel network, then there exists a continuous bijective map $g:Z\to P$ from a Polish space $Z$ such that the map $f\circ g:Z\to X$ is continuous.
\end{lemma}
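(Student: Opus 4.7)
The plan is to refine the Polish topology of $P$ so that $f$ becomes continuous, using a standard descriptive set-theoretic refinement together with the countable Borel network of $X$.

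Step one: fix a countable Borel network $\{N_n:n\in\w\}$ for $X$. Since $f$ is Borel and each $N_n$ is Borel in $X$, the preimage $f^{-1}(N_n)$ is a Borel subset of the Polish space $P$ for every $n\in\w$.

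Step two: apply the standard refinement theorem of descriptive set theory (iterate Theorem~13.1 of \cite{Ke}, the same tool used in Proposition~\ref{p:sub}) to obtain a Polish topology $\tau'$ on the set $P$ that refines the original Polish topology, generates the same Borel $\sigma$-algebra, and makes every $f^{-1}(N_n)$ clopen. Let $Z$ denote the set $P$ endowed with $\tau'$ and let $g:Z\to P$ be the identity map. Because $\tau'$ refines the original topology, $g$ is a continuous bijection.

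Step three: verify that $f\circ g:Z\to X$ is continuous. Given an open set $U\subset X$ and any point $z\in (f\circ g)^{-1}(U)=f^{-1}(U)$, the network property supplies an $n\in\w$ with $f(z)\in N_n\subset U$. Then $f^{-1}(N_n)$ is a $\tau'$-open neighborhood of $z$ contained in $f^{-1}(U)$, so $f^{-1}(U)=\bigcup\{f^{-1}(N_n):N_n\subset U\}$ is a union of $\tau'$-open sets, hence $\tau'$-open in $Z$.

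The heart of the argument is the refinement lemma; the only care needed is to ensure that countably many Borel sets can be made clopen simultaneously while keeping the resulting topology Polish. This is the standard iteration of the single-set refinement already invoked in the paper, so no new obstacle arises beyond a careful bookkeeping of the countable construction.
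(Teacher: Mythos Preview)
Your proof is correct and follows essentially the same route as the paper: pull back the countable Borel network through $f$, refine the Polish topology on $P$ so that all these preimages become clopen, and then observe that every open set in $X$ is a union of network elements, making $f\circ g$ continuous. The only cosmetic difference is that the paper cites \cite[13.5]{Ke} directly for the simultaneous refinement, whereas you describe it as an iteration of \cite[13.1]{Ke}.
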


\begin{proof}Assume that the space $X$ has a countable Borel network $\mathcal N$. By the Borel property of the map $f$, for every $N\in\mathcal N$ the  preimage $f^{-1}(N)$ is a Borel subset of $P$. By \cite[13.5]{Ke}, for the countable family $\{f^{-1}(N)\}_{N\in\mathcal N}$ of Borel subsets of $P$, there exists a bijective continuous map $g:Z\to P$ from a Polish space $Z$ such that for any $N\in\mathcal N$ the Borel set $f^{-1}(N)\subset P$ has clopen preimage $g^{-1}(f^{-1}(N))$ in $Z$. Consider the map $\varphi=f\circ g:Z\to X$ and observe that for every $N\in\mathcal N$ the preimage $\varphi^{-1}(N)=g^{-1}(f^{-1}(N))$ is open in $Z$. Since each open subset $U\subset X$ coincides with the union $\bigcup\mathcal N_U$ of the subfamily $\mathcal N_U:=\{N\in\mathcal N:N\subset U\}$, the preimage $\varphi^{-1}(U)=\bigcup_{N\in\mathcal N_U}\varphi^{-1}(N)$ is open in $Z$, which means that the (surjective) map $\varphi:Z\to X$ is  continuous.
\end{proof}

Lemma~\ref{l:Bn} implies the following characterization.

\begin{corollary}\label{c:ban1} A topological space $X$ with a countable Borel network if analytic if and only if it is banalytic.
\end{corollary}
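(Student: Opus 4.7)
The forward implication is essentially automatic from the definitions: every continuous function is Borel, so any continuous surjection from a Polish space onto $X$ already witnesses $X$ as banalytic. Thus the interesting content of the corollary is the reverse implication, and my plan is to reduce it immediately to Lemma~\ref{l:Bn}.

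Suppose $X$ is banalytic and has a countable Borel network. By the definition of banalyticity, pick a surjective Borel function $f:P\to X$ from a Polish space $P$. I would then invoke Lemma~\ref{l:Bn} directly: its hypothesis (countable Borel network on $X$, plus a surjective Borel function $f$ from a Polish space) is satisfied, so it produces a Polish space $Z$ together with a continuous bijection $g:Z\to P$ for which the composition $\varphi:=f\circ g:Z\to X$ is continuous.

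It remains only to verify surjectivity of $\varphi$. Since $g$ is a bijection, it is in particular surjective, and $f$ is surjective by assumption; hence $\varphi=f\circ g$ is surjective. Thus $X$ is a continuous image of the Polish space $Z$, which is exactly the definition of analyticity. Combined with the trivial forward direction, this establishes the equivalence.

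There is no real obstacle here: the corollary is a one-line consequence of Lemma~\ref{l:Bn}, and indeed the text announces it as such (\emph{``Lemma~\ref{l:Bn} implies the following characterization''}). The only care needed is to state the argument cleanly and note that the surjectivity of $\varphi$ follows because both $f$ and $g$ are surjective.
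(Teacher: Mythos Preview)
Your proposal is correct and matches the paper's approach exactly: the paper gives no explicit proof but simply states that Lemma~\ref{l:Bn} implies the characterization, and you have spelled out precisely how. The only addition is your verification that $\varphi=f\circ g$ is surjective, which is a routine detail the paper leaves implicit.
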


A topological space $X$ is {\em cosmic} if it is regular and has a countable network.
Since each regular space with a countable network has a countable closed network, Corollary~\ref{c:ban1} implies the following characterization.

\begin{corollary}\label{c:ban} A regular space $X$ is analytic if and only if $X$ is cosmic and banalytic.
\end{corollary}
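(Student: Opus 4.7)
The plan is to reduce \ref{c:ban} directly to Corollary~\ref{c:ban1}, using only the ``closed network'' remark about regular spaces that was made in the preceding paragraph. Since both the corollary and its ingredients are already in place, the argument is essentially bookkeeping, but it splits naturally into the two implications.

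For the ``only if'' direction, assume the regular space $X$ is analytic, so $X$ is a continuous image of some Polish space $P$. Then $X$ is banalytic, because every continuous map is Borel. Moreover, $P$ is separable metrizable and hence has a countable network; the continuous image of a space with a countable network again has a countable network, so $X$ has a countable network. Combined with regularity this is, by definition, cosmicity.

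For the ``if'' direction, assume $X$ is regular, cosmic and banalytic. By the remark preceding Corollary~\ref{c:ban1}, a regular space with a countable network has a countable closed network; since every closed subset of $X$ is Borel, this is in particular a countable Borel network. Now Corollary~\ref{c:ban1} applies and yields that $X$ is analytic.

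I expect no real obstacle here: the whole content is packaged into Corollary~\ref{c:ban1} and the passage from ``countable network'' to ``countable closed network'' in regular spaces, so the only thing to be careful about is to quote these two facts cleanly rather than reprove them.
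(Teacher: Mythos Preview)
Your proof is correct and follows exactly the paper's route: the paper simply remarks that a regular space with a countable network has a countable closed (hence Borel) network and then invokes Corollary~\ref{c:ban1}, leaving the ``only if'' direction as evident. You have merely written out both implications in full, which is fine.
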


\begin{example}\label{e:S} The Sorgenfrey line $\mathbb S$ is banalytic but not analytic.
\end{example}

\begin{proof} We recall that the {\em Sorgenfery line} is the real line endowed with the topology, generated by the base consisting of the half-intervals $[a,b)$ where $a<b$. It is well-known \cite[3.8.14]{Eng} that the Sorgenfrey line is hereditarily Lindel\"of, which implies that each open set $U\subset \mathbb S$ is a countable union of half-intervals and hence is a Borel (more precisely, $F_\sigma$-set) subset of the real line endowed with the Euclidean topology. This means that the identity map $\IR\to\mathbb S$ is Borel and hence $\mathbb S$ is banalytic. On the other hand, the Sorgenfrey line does not have countable network and hence cannot be analytic according to Corollary~\ref{c:ban}.
\end{proof}

\section{A factorization theorem for Baire banalytic spaces}\label{s:fac}

In this section we shall prove an important factorization theorem for Baire banalytic spaces. This theorem will be applied in Sections~\ref{s:G}, \ref{s:S}, \ref{s:P} to studying banalytic topological, semitopological, and paratopological groups.


A subset $A$ of a topological space $X$ is called {\em meager in} $X$ if $A$ can be written as the countable union of nowhere dense subsets in $X$. For any subset $A$ of a topological space let $\U_A$ be the family of all open subsets $U\subset X$ such that $U\cap A$ is meager in $X$. Then the set $A^\bullet=X\setminus\U_A$ is regular closed in $X$. Moreover,  $A$ is not meager in $X$ if and only if $A^\bullet\ne\emptyset$.

\begin{theorem}\label{t:fac} Let $X_\alpha$, $\alpha\in A$, be topological spaces with a countable Borel network and $X$ be a Baire banalytic subspace of the Tychonoff product $\prod_{\alpha\in A}X_\alpha$. Then each countable set $C\subset A$ is contained in a countable subset $D\subset A$ such that the projection $X_D$ of $X$ onto the $D$-face $\prod_{\alpha\in D}X_\alpha$ of\/ $\prod_{\alpha\in A}X_\alpha$ is a Baire analytic space.
\end{theorem}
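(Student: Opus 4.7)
For any countable $D\subset A$, the product $\prod_{\alpha\in D}X_\alpha$ admits a countable Borel network: the family of rectangles $\prod_{\alpha\in F}N_\alpha\times\prod_{\alpha\in D\setminus F}X_\alpha$, with $F\subset D$ ranging over finite subsets and each $N_\alpha$ ranging over a fixed countable Borel network of $X_\alpha$, is countable, consists of Borel sets (each a finite intersection of Borel cylinders), and forms a network of the product topology. Hence $X_D\subset\prod_{\alpha\in D}X_\alpha$ also carries a countable Borel network. Since $\pi_D|_X:X\to X_D$ is a continuous and hence Borel surjection, Proposition~\ref{p:im} gives that $X_D$ is banalytic, and Corollary~\ref{c:ban1} then yields that $X_D$ is analytic for every countable $D\supset C$.

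\textbf{Baireness via closing off.} The harder half is to choose $D$ so that $X_D$ is also Baire, since continuous surjective images of Baire spaces need not be Baire. The plan is to enlarge $C$ to a countable $D$ so that the projection $\pi_D|_X:X\to X_D$ is ``open on a sufficient family'' of basic rectangles—enough for the standard argument to transfer Baireness: if a non-empty open $V\subset X_D$ were meager, witnessed by closed nowhere-dense $F_k\subset X_D$, then each $\pi_D^{-1}(F_k)$ would be closed with empty interior in $X$ (otherwise openness would produce a non-empty open subset of $F_k$), so $\pi_D^{-1}(V)=\bigcup_k\pi_D^{-1}(F_k)$ would be a meager non-empty open subset of $X$, contradicting that $X$ is Baire. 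I would build $D=\bigcup_{n\in\omega}D_n$ with $D_0=C$ by the following closing-off: fix a Polish space $P$ with countable base $\mathcal B$ and a surjective Borel $f:P\to X$ witnessing banalyticity of $X$; given $D_n$, apply Lemma~\ref{l:Bn} to $\pi_{D_n}\circ f:P\to X_{D_n}$ (which is Borel with target having a countable Borel network) to obtain a Polish refinement $Z_n$ of $P$ with a continuous surjection $\varphi_n:Z_n\to X_{D_n}$; pulling back the countable base of $Z_n$ gives a countable family of subsets of $X_{D_n}$ that controls the topology of $X_{D_n}$, and to each such set I attach a basic rectangle of $\prod_{\alpha\in A}X_\alpha$ whose projection realizes it, and add the finite support of that rectangle to $D_n$ to form $D_{n+1}$.

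\textbf{Main obstacle.} The principal difficulty is to pin down, at each stage, the right countable collection of basic rectangles of $\prod_{\alpha\in A}X_\alpha$ whose supports must be absorbed, and to verify that closing off only these supports really does make $\pi_D$ open enough for the Baire transfer argument to run. One must use the fact that banalyticity constrains the topology of $X$ through the Borel image of a fixed Polish representation, so that the ``relevant'' family of rectangles is indeed countable at each stage, and that no additional witness to non-Baireness survives the limit $D=\bigcup_n D_n$; the careful bookkeeping of this (and the interplay between the refinements $Z_n$, the network structure of $X_{D_n}$, and the ambient product structure) is the main technical burden of the argument.
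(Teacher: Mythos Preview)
Your analyticity paragraph is correct and in fact cleaner than the paper's route: for \emph{every} countable $D$ the space $X_D$ inherits a countable Borel network from $\prod_{\alpha\in D}X_\alpha$, is banalytic as a continuous image of $X$, and hence is analytic by Corollary~\ref{c:ban1}. The paper obtains analyticity only as a by-product of the construction needed for Baireness.

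The Baireness half, however, has a genuine gap. Your transfer mechanism needs $\pi_D^{-1}(F)$ to have empty interior in $X$ for every closed nowhere dense $F\subset X_D$, i.e.\ that $\pi_D$ send each non-empty open subset of $X$ to a somewhere dense subset of $X_D$. This is an openness-type property of $\pi_D|_X$, and it cannot be arranged by absorbing countably many finite supports: non-empty open subsets of $X$ are traces of basic rectangles whose supports range over all of $A$, and there is no countable subfamily to close off against. Your inductive step (``pulling back the countable base of $Z_n$'' and ``attach[ing] a basic rectangle \dots\ whose projection realizes it'') does not name a condition that would yield this, as you acknowledge. The paper's missing ingredient is the Baire-hull operator $S\mapsto S^\bullet:=X\setminus\bigcup\{U\text{ open}:U\cap S\text{ meager in }X\}$, which is regular closed and has non-empty interior exactly when $S$ is non-meager. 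At stage $n$, with Polish topology $\tau_n$ on $P$ and countable base $\mathcal B_n$, one records for each $B\in\mathcal B_n$ with $f(B)$ non-meager the finite support of \emph{one} basic rectangle contained in the interior of $f(B)^\bullet$; these supports define $D_n$, and Lemma~\ref{l:Bn} then produces $\tau_{n+1}\supset\tau_n$ making $\pr_{D_n}\circ f$ continuous. With $D=\bigcup_nD_n$ and $\tau=\sup_n\tau_n$ Polish, the Baire argument runs inside $(P,\tau)$ rather than in $X$: if $W\subset X_D$ were open, non-empty and covered by closed nowhere dense sets $M_n$, one applies Baire category to $g^{-1}(W)\cap F$ (where $g=\pr_D\circ f$ and $F$ is the complement of the largest $\tau$-open set with meager $f$-image) to locate a basic $O_z\in\mathcal B_k$ with $O_z\cap F\subset g^{-1}(M_n)$. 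Since $f(O_z\setminus F)$ is meager, $f(O_z)^\bullet=f(O_z\cap F)^\bullet\subset\overline{f(O_z\cap F)}$, so $\pr_D(f(O_z)^\bullet)\subset\overline{g(O_z\cap F)}\subset M_n$; but the rectangle recorded for $O_z$ forces $\pr_D(f(O_z)^\bullet)$ to have non-empty interior in $X_D$, a contradiction. The operator $^\bullet$ is precisely the device that substitutes for the openness you were hoping to manufacture.
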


\begin{proof} Being banalytic, the space $X$ admits a surjective Borel map $f:P\to X$, defined on a Polish space $P$. Let $\tau_0$ be the topology of the Polish space $P$ and $\mathcal B_0$ be any countable base of the topology $\tau_0$. Let $\mathcal B_0^\bullet:=\{B\in\mathcal B_0: f(B)^\bullet\ne\emptyset\}$. By the definition of $\mathcal B_0^\bullet$, for any $B\in\mathcal B_0^\bullet$ the set $f(B)^\bullet$ has non-empty interior $f(B)^{\bullet\circ}$ in $X$.

 For any subset $D\subset A$ let $\pr_D:X\to\prod_{\alpha\in D}X_\alpha$ be the projection of $X$ onto the $D$-face $\prod_{\alpha\in D}X_\alpha$ of $\prod_{\alpha\in A}X_\alpha$, and let $X_D:=\pr_D(X)\subset\prod_{\alpha\in D}X_\alpha$.

Given any countable set $C\subset A$, find a countable set $D_0\subset A$ such that $C\subset D_0$ and for every $B\in\mathcal B_0^\bullet$ the open non-empty set $f(B)^{\bullet\circ}$ contains the preimage $\pr_{D_0}^{-1}(U_B)$ of some non-empty open set $U_B\subset X_{D_0}$. Since the space $X_{D_0}$ has a countable Borel network, we can apply Lemma~\ref{l:Bn} and find a Polish topology $\tau_1$ on $P$ such that $\tau_0\subset\tau_1$ and the map $\pr_{D_0}\circ f:(P,\tau_1)\to X_{D_0}$ is continuous. Let $\mathcal B_1$ be any countable base of the topology $\tau_1$.

Proceeding by induction, for every $n\in\mathbb N$ we can construct a Polish topology  $\tau_n$ on $P$, a countable base $\mathcal B_n$ of the topology $\tau_n$ and a countable subset $D_n\subset A$ such that the following conditions are satisfied:
\begin{itemize}
\item[$(a_n)$] $\tau_{n-1}\subset\tau_n$ and $D_{n-1}\subset D_n$;
\item[$(b_n)$] for any $B\in\mathcal B_n$ with $f(B)^\bullet\ne\emptyset$, the set $f(B)^\bullet$ contains the preimage $\pr_{D_n}^{-1}(U_B)$ of some non-empty open set $U_B\subset X_{D_n}$;
\item[$(c_n)$] the map $\pr_{D_n}\circ f:(P,\tau_{n+1})\to X_{D_n}$ is continuous.
\end{itemize}
 Let $D=\bigcup_{n\in\w}D_n$ and $\tau$ be the topology on $P$ generated by the base $\bigcup_{n\in\w}\mathcal B_n$. By \cite[13.3]{Ke}, the topology $\tau$ is Polish. The inductive conditions $(c_n)$, $n\in\mathbb N$, ensure that the function $g=\pr_D\circ f:(P,\tau)\to X_D$ is continuous.  Then the space $X_D$ is analytic.

 It remains to prove that the space $X_D$ is Baire. In the opposite case, we could find a non-empty open meager subset $W\subset X_D$. Then $W\subset \bigcup_{n\in\w}M_n$ for an increasing sequence $(M_n)_{n\in\w}$ of closed nowhere dense sets in $X_D$. By the continuity of the projection $\pr_D:X\to X_D$, the preimage $\pr_D^{-1}(W)$ is an open non-empty subset of $X$. Since $X$ is Baire, the open subset $\pr_D^{-1}(W)$ is not meager in $X$. By the continuity of the map $g=\pr_D\circ f:(P,\tau)\to X_D$, the set $V:=g^{-1}(W)$ is open in the Polish space $(P,\tau)$.

Let $\U$ be the family of all open sets $U\in \tau$ such that $f(U)$ is meager in $X$. Since the Polish space $(P,\tau)$ is hereditarily Lindel\"of, the union $U=\bigcup\U$ belongs to $\U$ and has meager image $f(U)$ in $X$. It follows that the set $F=P\setminus U$ is closed with respect to the topology $\tau$.

Taking into account that $f(U)$ is meager in $X$ and $\pr_D^{-1}(W)=f(V)$ is not, we conclude that $f(V)\not\subset f(U)$ and hence the intersection $V\cap F$ is not empty. Since $V\cap F\subset \bigcup_{n\in\w}g^{-1}(M_n)$ we can apply the Baire Theorem and find $n\in\w$ such that the set $V\cap F\cap g^{-1}(M_n)$ has non-empty interior in $V\cap F$. Consequently, there exist  a point $z\in V\cap F$ and a neighborhood $O_z\in\tau$ of $z$ such that $O_z\cap V\cap F\subset g^{-1}(M_n)$. Since the topology $\tau$ is generated by the base $\bigcup_{k\in\w}\mathcal B_k$, we can assume that $O_z\in\bigcup_{k\in\w}\mathcal B_k$ and $O_z\subset V$. Find $k\in\w$ such that $O_z\in\mathcal B_k$. It follows from $y\in F$ that $f(O_z)$ is not meager in $X$ and hence $f(O_z)^\bullet\ne\emptyset$. Then the inductive condition $(b_k)$ ensures that $f(O_z)^\bullet$ contains the preimage $\pr_{D_k}^{-1}(G)$ of some open set $G\subset X_{D_k}$ and hence the set $\pr_D(f(O_z)^\bullet)$ has non-empty interior in $X_D$.

Taking into account that the set $f(O_z\setminus F)\subset f(U)$ is  meager in $X$, we conclude that $f(O_z)^\bullet=f(O_z\cap F)^\bullet\subset \overline{f(O_z\cap F)}$ and hence
 $$\pr_D(f(O_z)^\bullet)\subset \pr_D(\overline{f(O_z\cap F)})\subset \overline{\pr_D\circ f(O_z\cap F)}=\overline{g(O_z\cap F)}\subset\overline{M_k}=M_k,$$which implies that the set $M_k$ has non-empty interior in $X_D$ and this is a desired contradiction, proving that the space $X_D$ is Baire.
\end{proof}

\section{Proof of Theorem~\ref{t:G}}\label{s:G}

In this section we prove Theorem~\ref{t:G}. The non-trivial implications $(2,3,4,5)\Ra(1)$ of this theorem are proved in Lemmas~\ref{l:G1}, \ref{l:G2}, \ref{l:G3} and \ref{l:G4}, respectively. We assume that all topological groups considered in this section satisfy the separation axiom $T_0$ and hence are regular, see \cite[1.3.14]{AT}.

\begin{lemma}\label{l:Bm} Each Baire topogical group $X$ with countable network is metrizable and separable.
\end{lemma}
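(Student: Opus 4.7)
The plan is to reduce to the Birkhoff--Kakutani theorem by showing that $X$ has countable pseudocharacter, and then upgrade the countable network to second countability. Since $X$ is a $T_0$ topological group, by \cite[1.3.14]{AT} it is Tychonoff, hence $T_1$ and regular; in particular the singleton $\{e\}$ at the identity is closed.

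First I would replace the given countable network $\mathcal N$ by the countable closed network $\mathcal N':=\{\bar N:N\in\mathcal N\}$, using regularity of $X$: given $x\in U$ open, pick $V$ open with $x\in V\subset\bar V\subset U$ and then $N\in\mathcal N$ with $x\in N\subset V$, so that $\bar N\subset U$. Since $\{e\}$ is closed, the open set $X\setminus\{e\}$ is a union of members of $\mathcal N'$ by the network property, so
$$\{e\}=\bigcap\{X\setminus N:N\in\mathcal N',\ e\notin N\}$$
exhibits $\{e\}$ as a countable intersection of open sets. Thus $X$ has countable pseudocharacter at the unit, and the Birkhoff--Kakutani theorem produces a compatible left-invariant metric on $X$, making $X$ metrizable.

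For separability, picking one point from each nonempty element of $\mathcal N$ produces a countable dense set $D\subset X$: for any open $U\ni x$ the network property gives $N\in\mathcal N$ with $x\in N\subset U$, hence $N$ is nonempty and its chosen representative lies in $U$. So $X$ is separable, and, combined with metrizability, this yields second countability and in particular the advertised separability.

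I expect no substantive obstacle; the only point requiring a moment's care is the passage from a countable network to a countable \emph{closed} network, which is where regularity (and hence the standing $T_0$ hypothesis) enters. I remark that neither the Baire assumption nor the banalyticity machinery is actually used for this lemma; the Baire hypothesis is present only for uniformity with the rest of Section~\ref{s:G}, where Lemma~\ref{l:Bm} will be combined with the factorization Theorem~\ref{t:fac}.
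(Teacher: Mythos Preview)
There is a genuine gap: the Birkhoff--Kakutani theorem requires a countable \emph{neighborhood base} at $e$, i.e.\ first countability, not merely countable pseudocharacter. Your argument only produces a countable family of open sets whose intersection is $\{e\}$; this is strictly weaker, and it does not suffice to metrize $X$. Concretely, the topological group $C_p([0,1])$ (continuous real functions on $[0,1]$ with the topology of pointwise convergence) has a countable network and countable pseudocharacter --- a countable dense set in $[0,1]$ witnesses that $\{0\}$ is a $G_\delta$ --- yet $C_p([0,1])$ is not first-countable and hence not metrizable. So without the Baire hypothesis the lemma is simply false, and your closing remark that ``neither the Baire assumption \dots\ is actually used'' is incorrect.

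The paper's proof uses the Baire property in an essential way to upgrade pseudocharacter to first countability. Given a neighborhood $U$ of $e$, one picks $V$ with $V^{-1}V\subset U$ and writes $V$ as a union of closed network elements; since $V$ is Baire, some such $N\subset V$ has nonempty interior $W$, and then $W^{-1}W\subset N^{-1}N\subset U$ is an \emph{open} neighborhood of $e$. Thus the countable family $\{(N^{-1}N)^\circ:N\in\mathcal N,\ e\in(N^{-1}N)^\circ\}$ is a genuine neighborhood base, and Birkhoff--Kakutani applies. Your separability argument is fine; it is the metrizability step that needs to be redone along these lines.
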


\begin{proof}  Let $\mathcal N$ be a countable network for the space $X$. Since the space $X$ is regular, we can replace each set $N\in\mathcal N$ by its closure $\bar N$ in $X$ and assume that the network $\mathcal N$ consists of closed subsets of $X$.
 For every $N\in\mathcal N$ let $(N^{-1}N)^\circ$ be the interior of the set $N^{-1} N=\{x^{-1}y:x,y\in N\}$ in $X$. 

We claim that the family
$$\mathcal B:=\{(N^{-1}N)^\circ:N\in\mathcal N,\;\;e\in(N^{-1}N)^\circ\}$$is a countable neighborhood base at the unit $e$ of the group $X$. Given any neighborhood $U\subset X$ of $e$, find an open neighborhood $V\subset X$ of $e$ such that $V^{-1}V\subset U$. Since $\mathcal N$ is a network for $X$, the open set $V$ coincides with the union $\bigcup\mathcal N_V$ of the subfamily $\mathcal N_V:=\{N\in\mathcal N:N\subset V\}$. Since $V$ is Baire (as an open subspace of the Baire space $X$), some set $N\in\mathcal N_V$ is not nowhere dense in $V$. Consequently, its closure $\bar N=N$ contains a non-empty open subset $W$ of $X$. Then $W^{-1}W$ is an open neighborhood of $e$ such that $W^{-1}W\subset N^{-1}{N}\subset V^{-1}V\subset U$. Consequently, $W^{-1}W\subset (N^{-1}N)^\circ$ and $(N^{-1}N)\in\mathcal B$.

Being first-countable, the topological group $X$ is metrizable by the Birkhoff-Kakutani Theorem \cite[9.1]{Ke}. The metrizable space $X$ is separable as it has a countable network.
\end{proof}

\begin{lemma}\label{l:G1} Each analytic Baire topological group is Polish.
\end{lemma}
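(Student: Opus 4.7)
The plan is to promote $X$ from being merely metrizable and separable to being Polish by embedding it into a Polish overgroup and invoking Pettis' theorem. First, I would observe that any analytic space, as a continuous image of a second-countable space, carries a countable network. Combined with $X$ being a Baire topological group, Lemma~\ref{l:Bm} then immediately yields that $X$ is metrizable and separable.

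Next, I would embed $X$ as a topological subgroup into a Polish group $G$; a natural choice is the Raikov completion of $X$ with respect to its two-sided uniformity, which is Polish precisely because $X$ is metrizable and separable. Replacing $G$ by the closure of $X$ in $G$, one may assume $X$ is dense in $G$. Since $X$ is a continuous image of a Polish space, the subset $X\subset G$ is analytic in $G$, and consequently has the Baire property in $G$ by the classical theorem that analytic subsets of Polish spaces are Baire measurable.

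The key intermediate step is to argue that $X$ is non-meager in $G$. If $X$ were meager in $G$, then $X\subset\bigcup_{n\in\w}F_n$ with each $F_n$ nowhere dense in $G$; density of $X$ in $G$ forces each $F_n\cap X$ to remain nowhere dense in $X$, and so $X$ would be meager in itself, contradicting the Baire hypothesis. Thus $X$ is a non-meager subgroup of $G$ with the Baire property, and Pettis' theorem yields a neighborhood $V$ of the identity of $G$ with $V\subset X\cdot X^{-1}=X$. Hence $X$ is an open, and therefore also closed, subgroup of the Polish group $G$; being clopen in a Polish space, $X$ is itself Polish.

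The step I expect to be most delicate is the choice of ambient Polish group: one needs that every metrizable separable topological group admits a Polish (Raikov) completion, and one must check that replacing the completion by the closure of $X$ preserves both its Polish topology and the analyticity, denseness, and Baireness of $X$. Once the embedding is set up, the category computation together with Pettis' theorem finishes the proof essentially mechanically.
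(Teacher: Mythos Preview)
Your argument is correct. The first half of your proof coincides with the paper's: analytic implies countable network, Lemma~\ref{l:Bm} then gives metrizable and separable, and the Raikov completion $\bar X$ is the required Polish overgroup (your ``replace $G$ by the closure of $X$'' step is redundant here, since $X$ is automatically dense in its Raikov completion).

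The endgames differ. You show $X$ is non-meager in $\bar X$ and invoke Pettis' theorem to conclude that $X=XX^{-1}$ is open, hence clopen, hence Polish. The paper instead extracts from the Baire property a dense $G_\delta$-set $G\subset\bar X$ with $G\subset X$, then argues directly: if some $\bar x\in\bar X\setminus X$ existed, the translate $G\bar x$ would be a second dense $G_\delta$-set disjoint from $G$, contradicting the Baire Category Theorem in $\bar X$. Both routes are standard and of comparable length; the paper's translate trick avoids appealing to Pettis and uses only the Baire Category Theorem, while your approach packages the category computation into a single citation of Pettis (a tool the paper does use later, in Lemma~\ref{l:G3}). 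Neither has a real advantage over the other.
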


\begin{proof}  Being a continuous image of a Polish space, the space $X$ has a countable network \cite[4.9]{Grue}. Applying Lemma~\ref{l:Bm}, we conclude that the topological group $X$ is metrizable and separable. Then its Raikov completion $\bar H$ is a Polish group. By the Lusin-Sierpi\'nski Theorem \cite[21.6]{Ke}, the analytic subspace $X$ of the Polish space $\bar X$ has the Baire property in $\bar X$ and hence $X$ contains a $G_\delta$-subset $G$ of $\bar X$  such that $X\setminus G$ is meager in $\bar X$. Since $X$ is Baire, the $G_\delta$-subset $G$ is dense in $X$ and in $\bar X$. Assuming that $X\ne\bar X$, we can choose a point $\bar x\in\bar X\setminus X$ and conclude that $G\subset X$ and $G\bar x\subset \bar X$ are two disjoint dense $G_\delta$-sets in the Polish space $\bar X$, which contradicts the Baire Theorem. This contradiction shows that $X=\bar X$ and hence $X$ is a Polish group.
\end{proof}

\begin{lemma}\label{l:G2} Each banalytic cosmic Baire topological group is Polish.
\end{lemma}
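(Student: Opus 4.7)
The plan is to combine the characterization of analytic spaces given by Corollary~\ref{c:ban} with the already-established Lemma~\ref{l:G1}, so the argument essentially reduces to assembling earlier results rather than any new construction.

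First I would observe that $X$ is regular: since we have adopted the convention in this section that all topological groups satisfy $T_0$, any such group is automatically regular (in fact Tychonoff), so no separate verification is needed. Moreover, the hypothesis that $X$ is cosmic already bundles together regularity with the existence of a countable network, which is the decisive ingredient.

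Next I would invoke Corollary~\ref{c:ban}, which states that a regular topological space is analytic if and only if it is cosmic and banalytic. Since our $X$ is regular (as a $T_0$ topological group), cosmic, and banalytic by hypothesis, Corollary~\ref{c:ban} immediately yields that $X$ is analytic.

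Finally, I would apply Lemma~\ref{l:G1}: any analytic Baire topological group is Polish. Since $X$ is Baire by hypothesis and analytic by the previous step, this completes the proof. There is no real obstacle here; the work was already done in establishing Corollary~\ref{c:ban} (which relied on the Borel-network refinement in Lemma~\ref{l:Bn}) and in Lemma~\ref{l:G1} (which exploited the Lusin--Sierpi\'nski Baire-property theorem together with Raikov completions). The content of Lemma~\ref{l:G2} is simply that the hypotheses of these two previous results fit together, and the role of Lemma~\ref{l:Bm} which preceded it is only implicit, being already absorbed into the proof of Lemma~\ref{l:G1}.
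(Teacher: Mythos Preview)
Your proposal is correct and follows exactly the same route as the paper: apply Corollary~\ref{c:ban} to upgrade banalytic+cosmic to analytic, then invoke Lemma~\ref{l:G1}. The additional commentary you provide about regularity and the roles of Lemmas~\ref{l:Bn} and~\ref{l:Bm} is accurate but not needed for the argument itself.
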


\begin{proof} Assume that a topological group $X$ is Baire, banalytic and cosmic. By Corollary~\ref{c:ban}, the space $X$ is analytic and by Lemma~\ref{l:G1}, the topological group $X$ is Polish.
\end{proof}

\begin{lemma}\label{l:G3} Each banalytic Baire topological group with countable pseudocharacter is Polish.
\end{lemma}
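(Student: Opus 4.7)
The plan is to combine Theorem~\ref{t:fac} with an embedding of $X$ into a product of second-countable topological groups, then use countable pseudocharacter to single out a good countable set of coordinates, and finally conclude via Lemma~\ref{l:G1} that the projection onto the resulting subproduct is a Polish group topologically isomorphic to $X$. First I would establish that $X$ is $\omega$-narrow: countable pseudocharacter makes $X$ a $T_{\mathsf{Borel}}$-space, so Theorem~\ref{t:spread} yields countable spread, and in particular countable cellularity. Given any neighborhood $U$ of $e$ and a symmetric open $V$ with $V\cdot V\subset U$, a maximal set $M\subset X$ with $\{xV:x\in M\}$ pairwise disjoint must be countable by cellularity and, by maximality, satisfies $X=M\cdot V\cdot V\subset M\cdot U$; the same argument applied from the right gives $\omega$-narrowness. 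Guran's embedding theorem then realizes $X$ as a topological subgroup of a product $\prod_{\alpha\in A}G_\alpha$ of second-countable topological groups, each carrying a countable Borel network.

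Using the pseudocharacter, write $\{e\}=\bigcap_{n\in\w}W_n$ with $W_n\subset X$ open. For every $n$ choose a basic open neighborhood $B_n=X\cap\pi_{F_n}^{-1}(U_n)\subset W_n$ of $e$ depending on a finite set $F_n\subset A$ of coordinates, and put $C_0:=\bigcup_{n\in\w}F_n$. If $x\in X$ agrees with $e$ on every coordinate in $C_0$, then $x\in B_n$ for every $n$, hence $x\in\bigcap_nW_n=\{e\}$; so the kernel of $\pr_{C_0}$, and of any $\pr_D$ with $D\supset C_0$, is trivial. Theorem~\ref{t:fac} produces a countable $D\supset C_0$ such that $X_D:=\pr_D(X)$ is a Baire analytic space. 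Since $X_D$ is also a topological subgroup of the Polish group $\prod_{\alpha\in D}G_\alpha$, it is a Baire analytic topological group, and Lemma~\ref{l:G1} upgrades it to a Polish group. Thus $\pr_D\colon X\to X_D$ is a continuous injective homomorphism onto the Polish group $X_D$.

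It remains to verify that $\pr_D$ is open, for then $X\cong X_D$ is Polish. Let $U$ be a neighborhood of $e$ in $X$, fix a symmetric open neighborhood $V$ of $e$ with $V\cdot V\subset U$, and put $A:=\pr_D(V)$. As a continuous image of the banalytic set $V$, the set $A$ is banalytic by Proposition~\ref{p:im}; since $X_D$ has a countable Borel network, Corollary~\ref{c:ban1} makes $A$ analytic in the Polish space $X_D$, so $A$ has the Baire property. By $\omega$-narrowness, $X=C\cdot V$ for some countable $C\subset X$, hence $X_D=\pr_D(C)\cdot A$ is a countable union of translates of $A$; since the Polish space $X_D$ is Baire, one of these translates, and therefore $A$ itself, is non-meager in $X_D$. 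Pettis' theorem, applied in the Polish group $X_D$ to the symmetric BP-set $A$, gives that $A\cdot A=\pr_D(V\cdot V)\subset\pr_D(U)$ contains a neighborhood of $e$ in $X_D$. Hence $\pr_D$ is open.

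I expect the hardest part to be precisely this last step: transferring a Baire-category argument from the fine banalytic topology of $X$ to the coarse Polish topology of $X_D$ requires orchestrating $\omega$-narrowness (to render $A$ non-meager in $X_D$), the analyticity of $A$ in $X_D$ (to obtain the Baire property), and Pettis' theorem in the right order; the remainder of the proof assembles already-stated machinery, namely Theorem~\ref{t:fac}, Guran's embedding, and Lemma~\ref{l:G1}.
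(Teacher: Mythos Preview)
Your proof is correct and follows essentially the same route as the paper: countable spread $\Rightarrow$ $\omega$-narrowness $\Rightarrow$ Guran's embedding, then Theorem~\ref{t:fac} applied over a countable set on which the projection is injective, concluding that $X_D$ is Polish and that $\pr_D$ is open via Pettis. The only cosmetic differences are that the paper cites Lemma~\ref{l:G2} rather than Lemma~\ref{l:G1} for the Polishness of $X_D$, and obtains the analyticity of $\pr_D(V)$ via Lemma~\ref{l:Bn} (writing it as a continuous image of the Borel set $f^{-1}(V)$) instead of via Propositions~\ref{p:sub}, \ref{p:im} and Corollary~\ref{c:ban1} as you do.
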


\begin{proof} Assume that a topological group $X$ is banalytic, Baire, and has a countable pseudocharacter. By Theorem~\ref{t:spread}, the topological group $X$ has countable spread and hence is $\w$-narrow. By Guran's Theorem \cite{Guran} (see also \cite[3.4.23]{AT}), the $\w$-narrow topological group $X$ can be identified with a subgroup of a Tychonoff product $\prod_{\alpha\in A}X_\alpha$ of metrizable separable topological groups $X_\alpha$. Since $X$ has countable pseudocharacter, there exists a countable subset $C\subset A$ such that the projection $\pr_C:X\to X_C\subset\prod_{\alpha\in C}X_\alpha$ is injective. By  Theorem~\ref{t:fac}, the set $C$ is contained in a countable subset $D\subset A$ such that the projection $X_D$ is a Baire space. It follows from  the injectivity of $\pr_C$ and the inclusion $C\subset D$ that the projection $\pr_D:X\to X_D$ is a continuous bijective map. The space $X_D$ is banalytic, being a continuous image of the banalytic space $X$. Since the topological group $X_D\subset\prod_{\alpha\in D}X_\alpha$ is second-countable, we can apply Lemma~\ref{l:G2} and conclude that the topological group $X_D$ is Polish.

 It remains to prove that the bijective continuous map $\pr_D:X\to X_D$ is a homeomorphism. Since $\pr_D$ is a group homomorphism, it suffices to check that for any neighborhood $U\subset X$ of the unit $e\in X$, the image $\pr_D(U)$ is a neighborhood of the unit in the Polish group $X_D$. Using the continuity of the group operations on $X$, find an open neighborhood $V\subset X$ of $e$ such that $VV^{-1}\subset U$. Since $X$ is $\w$-narrow, there exists a countable set $L\subset X$ such that $X=L\cdot V$. Then $X_D=\pr_D(L)\cdot\pr_D(V)$, which implies that the set $W:=\pr_D(V)$ is not meager in the Polish group $X$.

By the banalycity of $X$, there exists a Borel surjective map $f:P\to X$, defined on a Polish space. By Lemma~\ref{l:Bn}, we can additionally assume that the map $\pr_D\circ f:P\to X_D$ is continuous.  Since the map $f$ is Borel, the preimage $f^{-1}(V)$ is a Borel subset of the Polish space $X$. By \cite[14.4]{Ke}, the continuous image $W=\pr_D(V)=\pr_D\circ f(f^{-1}(V))$ of the Borel set $f^{-1}(V)\subset P$ is an analytic space. By the  Lusin-Sierpi\'nski Theorem \cite[21.6]{Ke}, the analytic subset $W$ has the Baire Property in $X$. Since $W$ is a non-meager set with the Baire Property in the Polish  group $X_D$, we can apply the Pettis Theorem \cite[9.9]{Ke} and conclude that $WW^{-1}=\pr_D(VV^{-1})\subset \pr_D(U)$ is a neighborhood of the unit in the Polish group $X_D$. Therefore, the homomorphism $\pr_D:X\to X_D$ is open and hence is a homeomorphism. Consequently, the topological group $X$ is Polish.
\end{proof}

\begin{lemma}\label{l:G4} Under PFA each banalytic Baire topological group $X$ is Polish.
\end{lemma}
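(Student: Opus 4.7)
The plan is to reduce Lemma~\ref{l:G4} to Lemma~\ref{l:G3} by verifying that, under PFA, the space $X$ automatically has countable pseudocharacter. Once countable pseudocharacter is in hand, the previous lemma delivers Polishness with no further work.

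First I would invoke the standing convention of Section~\ref{s:G} that every topological group considered is $T_0$, and hence regular (and in particular Hausdorff) by \cite[1.3.14]{AT}. Thus $X$ is a Hausdorff banalytic space, so Corollary~\ref{c:PFA} applies under PFA and yields that $X$ has countable pseudocharacter.

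Second, with $X$ established to be a banalytic Baire topological group with countable pseudocharacter, I would apply Lemma~\ref{l:G3} directly to conclude that $X$ is Polish.

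The key substantive content is therefore already carried by Corollary~\ref{c:PFA}, which in turn rests on Theorem~\ref{t:spread} combined with Todor\v{c}evi\'c's theorem that under PFA countable spread implies countable pseudocharacter for Hausdorff spaces; there is no additional obstacle at the level of Lemma~\ref{l:G4} itself, since the route from ``countable pseudocharacter'' to ``Polish'' has already been set up in Lemma~\ref{l:G3}.
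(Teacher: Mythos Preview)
Your proposal is correct and matches the paper's proof essentially verbatim: invoke the $T_0$ (hence Hausdorff) convention, apply Corollary~\ref{c:PFA} under PFA to obtain countable pseudocharacter, and then feed this into Lemma~\ref{l:G3}. There is nothing to add.
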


\begin{proof} By Corollary~\ref{c:PFA}, under PFA the banalytic Hausdorff space $X$ has countable pseudocharacter. By Lemma~\ref{l:G3}, the topological group $X$ is Polish.
\end{proof}

\section{Proof of Theorem~\ref{t:S}}\label{s:S}

In this section we prove Theorem~\ref{t:S}. The non-trivial implications $(2,3,4)\Ra(1)$ of this theorem are deduced from Theorem~\ref{t:G} and Lemmas~\ref{l:S2} and \ref{l:S3}, proved below.

\begin{lemma}\label{l:S2} Each cosmic banalytic Baire semitopological group $X$ is a Polish topological group.
\end{lemma}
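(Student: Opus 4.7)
The plan is to reduce to Theorem~\ref{t:G} by first upgrading the semitopological group structure on $X$ to a genuine topological group structure, and then invoking the topological-group case already proved.

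First, because $X$ is cosmic and banalytic, Corollary~\ref{c:ban} gives that $X$ is analytic; in particular $X$ has a countable network. Combined with regularity (built into cosmicity), this makes $X$ submetrizable: the countable network generates a weaker regular second-countable topology on the underlying set, hence a weaker separable metrizable topology. So $X$ is a Baire semitopological group whose topology is finer than a separable metrizable one and which is itself analytic with countable network.

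Second, and this is where the main work happens, I would show that $X$ is in fact a topological group, i.e.\ that the separately continuous multiplication is jointly continuous and that inversion is continuous. This is the Ellis/Bouziad-type step: under the Baire hypothesis, cosmic (or more generally Čech-analytic) semitopological groups are automatically topological groups. The standard route is first to establish joint continuity of multiplication at the unit $e$, using a Baire-category argument applied to the countable collection of translates provided by the network together with the $\w$-narrowness one gets from Theorem~\ref{t:spread} (the space $X$ is banalytic with countable pseudocharacter, being submetrizable, so Theorem~\ref{t:spread} applies and yields countable spread; hence $X$ is hereditarily separable). Once joint continuity is in hand, continuity of inversion follows by a symmetric argument: the Baire property together with the analytic structure forces $x\mapsto x^{-1}$ to be continuous at $e$ by the Pettis-type trick (any analytic, non-meager subset $W$ of $X$ has $W^{-1}W$ a neighborhood of $e$), exactly as in the proof of Lemma~\ref{l:G3}.

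Third, once $X$ is known to be a topological group, the hypotheses of Lemma~\ref{l:G2} are satisfied: $X$ is a banalytic, cosmic, Baire topological group. Lemma~\ref{l:G2} then yields that $X$ is Polish, completing the proof.

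The main obstacle is the second step, namely producing joint continuity from separate continuity. The separability/submetrizability gained from cosmicity is what reduces the question to a form where a Pettis-type Baire-category argument can be run, but setting this up cleanly in the semitopological (as opposed to topological) setting, where a priori no neighborhood base at $e$ is available, is the delicate point.
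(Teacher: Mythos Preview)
Your overall architecture is exactly the paper's: use Corollary~\ref{c:ban} to get analyticity, upgrade the semitopological group to a topological group, then invoke the topological-group case (Lemma~\ref{l:G2} or equivalently Theorem~\ref{t:G}). The only substantive divergence is in the middle step.

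The paper handles the upgrade in one line by citing Bouziad's theorem \cite[Theorem~3.3]{Bouziad}: every \v Cech-analytic Baire (Tychonoff) semitopological group is a topological group. Since $X$ is cosmic, it is normal and hence Tychonoff; since $X$ is analytic, it is \v Cech-analytic; and $X$ is Baire by hypothesis. So Bouziad applies directly.

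You correctly identify this as ``the Ellis/Bouziad-type step'' and even state the relevant theorem, but then you try to re-derive it by hand, and that sketch is not a proof. The passage from countable spread to $\w$-narrowness and from there to joint continuity via a Pettis-type argument is precisely the non-trivial content of Bouziad's paper (and its predecessors by Reznichenko and others), and you yourself flag it as ``the main obstacle'' and ``the delicate point.'' As written, your second step is a gap: either cite Bouziad's theorem outright, as the paper does, or commit to reproducing a full proof of it. The remaining ingredients you list (submetrizability, countable spread via Theorem~\ref{t:spread}, the Pettis trick) are all correct observations but do not by themselves assemble into a proof of joint continuity in the merely separately continuous setting.
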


\begin{proof}  By Corollary~\ref{c:ban}, the cosmic banalytic space $X$ is analytic. Being cosmic, the space $X$ is normal \cite[3.8.2]{Eng} and hence Tychonoff. Now we see that $X$ is an analytic Baire Tychonoff semitopological group. By Theorem~3.3 \cite{Bouziad} of Bouziad, $X$ is a topological group. Now we can apply Theorem~\ref{t:G} and conclude that $X$ is a Polish topological group.
\end{proof}

\begin{lemma}\label{l:S3}  A semitopological group $X$ is a topological group if $X$ is banalytic, Baire, and $X$ is topologically isomorphic to a subgroup of the Tychonoff product of cosmic semitopological groups.
\end{lemma}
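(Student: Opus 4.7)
The plan is to reduce to the Polish case via Theorem~\ref{t:fac} and Lemma~\ref{l:S2}, and then lift continuity of the group operations from suitable countable projections. Fix the given embedding $X\hookrightarrow\prod_{\alpha\in A}X_\alpha$, in which each factor $X_\alpha$ is a cosmic semitopological group. Since each $X_\alpha$ is regular with a countable network, it admits a countable closed, and hence countable Borel, network, so the hypothesis of Theorem~\ref{t:fac} is met. For any countable $D\subset A$ the countable product $\prod_{\alpha\in D}X_\alpha$ is again cosmic (cosmic is preserved by countable products and subspaces), so the projection $X_D:=\pr_D(X)$ is a cosmic subspace of that product; since $\pr_D$ is a group homomorphism, $X_D$ is also a semitopological subgroup of $\prod_{\alpha\in D}X_\alpha$.

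Next, for every finite $F\subset A$ I would apply Theorem~\ref{t:fac} with $C=F$ to obtain a countable $D_F\supset F$ such that $X_{D_F}$ is a Baire analytic space. Combined with the preceding paragraph, $X_{D_F}$ is then a cosmic, analytic, Baire semitopological group, and Lemma~\ref{l:S2} asserts that $X_{D_F}$ is in fact a Polish topological group. In particular the multiplication $X_{D_F}\times X_{D_F}\to X_{D_F}$ and the inversion on $X_{D_F}$ are jointly continuous.

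Finally, I would lift continuity of the operations back to $X$. Given $x,y\in X$ and a subbasic neighborhood $W=\pr_F^{-1}(V)\cap X$ of $xy$ with $V$ open in $\prod_{\alpha\in F}X_\alpha$ and $F\subset A$ finite, set $D=D_F$ and let $V_D\subset\prod_{\alpha\in D}X_\alpha$ be the preimage of $V$ under the natural projection $\prod_{\alpha\in D}X_\alpha\to\prod_{\alpha\in F}X_\alpha$. Since $X_D$ is a topological group and $\pr_D(xy)=\pr_D(x)\pr_D(y)\in V_D\cap X_D$, joint continuity of multiplication on $X_D$ yields open neighborhoods $U_x,U_y\subset X_D$ of $\pr_D(x),\pr_D(y)$ with $U_x U_y\subset V_D\cap X_D$; pulling these back via $\pr_D$ to $X$ produces open neighborhoods of $x$ and $y$ whose product lies inside $W$. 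The identical argument applied to a subbasic neighborhood of $x^{-1}$, using continuous inversion on $X_D$, yields continuity of inversion on $X$. I expect no genuine obstacle: the content is entirely packaged in Theorem~\ref{t:fac} and Lemma~\ref{l:S2}, and the only verification requiring a moment's thought is that the projections $X_D$ indeed inherit the cosmic semitopological structure from the $X_\alpha$.
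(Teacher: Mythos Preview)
Your proposal is correct and follows essentially the same approach as the paper: use Theorem~\ref{t:fac} to produce, above each finite $F\subset A$, a countable $D\supset F$ with $X_D$ Baire (hence Polish topological by Lemma~\ref{l:S2}), and then pull the continuity of multiplication and inversion back through the projections $\pr_D$. The paper phrases the last step as a topological embedding $X\hookrightarrow\prod_{D\in\mathcal P}X_D$ into a product of Polish topological groups, whereas you spell out the equivalent subbasic-neighborhood verification; the content is the same.
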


\begin{proof} By our assumption, we can identify $X$ with a subgroup of the Tychonoff product $\prod_{\alpha\in A}X_\alpha$ of cosmic semitopological groups $X_\alpha$.

Let $\mathcal P$ be the family of all countable subsets $D\subset A$ such that the projection $X_D$ of $X$ onto the $D$-face $\prod_{\alpha\in D}X_\alpha$ of $\prod_{\alpha\in A}X_\alpha$ is a Polish topological group.

We claim that each countable subset $C\subset A$ is contained in some set $D\in\mathcal P$. Indeed, by the factorization Theorem~\ref{t:fac}, every countable subset $C\subset A$ is contained in a countable set $D\subset A$ such that the projection $X_D$ of $X$ onto $\prod_{\alpha\in D}X_\alpha$  is a Baire space. Taking into account that the projection $\pr_D:X\to X_D$ is continuous and the space $X$ is banalytic, we conclude that the cosmic space $X_D$ is banalytic. By Lemma~\ref{l:S2}, the banalytic cosmic Baire semitopological group $X_D$ is a Polish topological group, which means that $D\in\mathcal P$.

Taking into account that each finite subset $C\subset A$ is contained in a set $D\in\mathcal P$, we can show that the map
$$p:X\to \prod_{D\in\mathcal P}X_D,\;\;p:x\mapsto(\pr_D(x))_{D\in\mathcal P},$$
is a topological embedding, which implies that $X$ is a topological group.
\end{proof}

\section{Proof of Theorem~\ref{t:P}}\label{s:P}

 Theorem~\ref{t:P} can be deduced from Theorems~\ref{t:G}, \ref{t:S}, \ref{t:spread} and
 Lemma~\ref{l:P1}, \ref{l:P2} providing conditions under which a paratopological group is a topological group.
These lemmas are taken from the Ph.D. Thesis of Ravsky \cite[Lemmas 5.1, 5.9, 5.10]{RDiss} and were obtained by Ravsky in collaboration with Reznichenko, see for instance, ~ \cite[Proposition 4]{RavRez}.

\begin{lemma}\label{l:P1} A paratopological group $X$ is a topological group if and only if for any neighborhood $U\subset X$ of the unit $e$ the set $\overline{U}\cap\overline{U^{-1}}$ has non-empty interior in $X$.
\end{lemma}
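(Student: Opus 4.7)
The plan splits naturally into the two implications.

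The forward direction is immediate: if $X$ is a topological group then inversion is a homeomorphism, so for every open neighbourhood $U$ of $e$ the set $U\cap U^{-1}$ is open, contains $e=e^{-1}$, and is contained in $\overline{U}\cap\overline{U^{-1}}$, making its interior non-empty.

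For the converse, assume the hypothesis. Because all translations are homeomorphisms in a paratopological group, it suffices to show that inversion is continuous at $e$, i.e.\ that $V^{-1}$ is a neighbourhood of $e$ for every open neighbourhood $V$ of $e$. Given such a $V$, I would first choose, using joint continuity of multiplication, an open $U\ni e$ with $U\cdot U\subset V$, and then extract a non-empty open set $H\subset\overline{U}\cap\overline{U^{-1}}$ from the hypothesis. Since $H$ is an open subset of both $\overline{U}$ and $\overline{U^{-1}}$, the sets $H\cap U$ and $H\cap U^{-1}$ are both dense in $H$; chaining these density statements one can select $c\in H\cap U\cap U^{-1}$. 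Then $c\in H$ and $c^{-1}\in U$, so the left translate $c^{-1}H$ is an open neighbourhood of $e$, and a direct calculation using $H\subset\overline{U^{-1}}$ and $c\in U$ gives
\[
c^{-1}H\;\subset\;\overline{c^{-1}U^{-1}}\;=\;\overline{\{(uc)^{-1}:u\in U\}}\;\subset\;\overline{(U\cdot U)^{-1}}\;\subset\;\overline{V^{-1}}.
\]

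The main obstacle is to upgrade this closure-level containment to an honest inclusion of some open neighbourhood of $e$ into $V^{-1}$ itself. The crucial additional observation is that the dense (but non-open) subset $c^{-1}(H\cap U^{-1})$ of $c^{-1}H$ is already contained in $V^{-1}$: for any $h\in H\cap U^{-1}$ one has $h^{-1}\in U$, and hence $(c^{-1}h)^{-1}=h^{-1}c\in U\cdot U\subset V$, so $c^{-1}h\in V^{-1}$. The remaining task is to refine the construction---by applying the hypothesis to a smaller open neighbourhood $U'\ni e$ with $U'U'\subset U$ and interleaving the two symmetric density statements produced by $H\subset\overline{U}$ and $H\subset\overline{U^{-1}}$---so that this dense inclusion into $V^{-1}$ promotes to an inclusion of an entire open neighbourhood of $e$. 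Carrying out this last refinement, which uses the hypothesis in its full ``for every open $U\ni e$'' strength by applying it afresh at each stage of the iteration, is the technical heart of the argument in \cite{RDiss,RavRez} and is precisely the step one expects to be hardest.
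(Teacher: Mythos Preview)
Your proposal is incomplete: you explicitly defer the decisive step---upgrading the inclusion $c^{-1}H\subset\overline{V^{-1}}$ to an inclusion into $V^{-1}$ itself---to the references, calling it the ``technical heart'' and suggesting it requires an iteration of the hypothesis. In fact no iteration is needed, and the missing idea is a one-line observation about closures in paratopological groups.

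The point you are missing is that in any paratopological group one has $\overline{A}\subset A\,W^{-1}$ for every subset $A$ and every open neighbourhood $W$ of $e$: if $x\in\overline{A}$ then the open set $xW$ meets $A$, so $x\in AW^{-1}$. Taking $A=U^{-1}$ and $W=U$ gives $\overline{U^{-1}}\subset (U^{-1})^2$. With this, your own open neighbourhood $c^{-1}H$ of $e$ already satisfies
\[
c^{-1}H\;\subset\;c^{-1}\overline{U^{-1}}\;\subset\;c^{-1}(U^{-1})^2\;\subset\;(U^{-1})^3,
\]
since $c\in U$ gives $c^{-1}\in U^{-1}$. Had you chosen $U$ with $U^3\subset V$ rather than $U^2\subset V$, you would now have $c^{-1}H\subset V^{-1}$ and be finished.

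The paper's proof is precisely this argument, stripped of the detour through the point $c$: from the hypothesis one gets $\overline{U}\cap(\overline{U^{-1}})^\circ\ne\emptyset$, hence (since $U$ is open and dense in $\overline U$) $U\cap(\overline{U^{-1}})^\circ\ne\emptyset$; the inclusion $\overline{U^{-1}}\subset(U^{-1})^2$ then yields a point $x\in U\cap((U^{-1})^2)^\circ$, and
\[
e=x\cdot x^{-1}\in ((U^{-1})^2)^\circ\cdot U^{-1}\subset((U^{-1})^3)^\circ.
\]
So $(U^{-1})^3$ is a neighbourhood of $e$ for every open $U\ni e$, and inversion is continuous. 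Three lines, no iteration.
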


\begin{proof} Only the ``if'' part needs a proof. For a subset $A$ of $X$ by $A^\circ$ we denote the interior of $A$ in $X$. By our assumption, for every neighborhood $U\subset X$ of $e$ we have $\overline U\cap \overline{U^{-1}}^\circ\supset (\overline{U}\cap\overline{U^{-1}})^\circ
\ne\varnothing$ and hence $U\cap (U^{-1})^{2\circ}\supset U\cap \overline{U^{-1}}^\circ \ne\varnothing$.
Then $e\in (U^{-1})^{2\circ}U^{-1}\subset (U^{-1})^{3\circ}$.
Thus $G$ is a topological group.
\end{proof}

\begin{lemma}\label{l:P2} A Baire paratopological group $X$ is a topological group if one of the following conditions is satisfied:
\begin{enumerate}
\item $X$ is totally $\w$-narrow;
\item the square $X\times X$ has countable spread;
\item $X$ is a $T_1$-space and the square $X\times X$ has countable
extent.
\end{enumerate}
\end{lemma}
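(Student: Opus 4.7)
The plan is to reduce every case to Lemma~\ref{l:P1}: given a neighborhood $U$ of the unit $e$ in $X$, we must show that $\overline U\cap\overline{U^{-1}}$ has non-empty interior. For (1) the argument is immediate. Total $\w$-narrowness applied to $U$ furnishes a countable $C\subset X$ with $X=C\cdot(U\cap U^{-1})$. Since $X$ is Baire and is covered by the countable family $\{c(U\cap U^{-1}):c\in C\}$, at least one of these translates is somewhere dense; because left translation by $c$ is a homeomorphism of the paratopological group $X$, the set $U\cap U^{-1}$ itself is somewhere dense. Hence $\overline{U\cap U^{-1}}\subset\overline U\cap\overline{U^{-1}}$ has non-empty interior.

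For (2) and (3) I would argue by contradiction. Assume some open neighborhood $U$ of $e$ satisfies $(\overline U\cap\overline{U^{-1}})^\circ=\emptyset$, and choose an open neighborhood $V$ of $e$ with $V\cdot V\subset U$. Then $\overline V\cap\overline{V^{-1}}\subset\overline U\cap\overline{U^{-1}}$ also has empty interior, and from this one deduces the key density observation that $V\setminus\overline{V^{-1}}$ is dense in $V$: otherwise a non-empty open $W\subset V$ would lie inside $\overline V\cap\overline{V^{-1}}$, a contradiction.

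The goal is now to construct, by transfinite recursion of length $\w_1$, a family $\{(x_\alpha,x_\alpha^{-1}):\alpha<\w_1\}\subset X\times X$ that is discrete in $X\times X$ and, in case (3), also closed there. At stage $\alpha$ one picks $x_\alpha$ in the dense open set $V\setminus\overline{V^{-1}}$ while avoiding suitable closed sets built from the previously chosen data, and then exhibits a separating open box $A_\alpha\times B_\alpha$ isolating $(x_\alpha,x_\alpha^{-1})$ from every other pair in the family. The crucial ingredient is the asymmetry $x_\alpha\in V$ but $x_\alpha^{-1}\notin\overline V$ (because $x_\alpha\notin\overline{V^{-1}}$), which provides the room to choose $B_\alpha$ disjoint from the first coordinates of the earlier pairs and $A_\alpha$ disjoint from the inverses of those first coordinates. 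For case (3), the $T_1$ axiom forces the graph $\Gamma=\{(x,x^{-1}):x\in X\}$ to be closed in $X\times X$, being the preimage of the closed point $e$ under continuous multiplication; arranging the recursion so that the constructed set is also closed inside $\Gamma$ then yields an uncountable closed discrete subspace of $X\times X$, contradicting countable extent. For case (2), the uncountable discrete subspace obtained from the recursion directly contradicts countable spread.

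The main obstacle is the transfinite bookkeeping: at each stage one must keep the selection region non-empty (using the Baire property of $X$ to refresh the density of $V\setminus\overline{V^{-1}}$ after every countable extraction of closed nowhere-dense obstructions) while simultaneously producing a box that isolates the new pair from all previously chosen ones and, in case (3), preventing any limit point of the new family from appearing in $\Gamma$. The asymmetric placement of $x_\alpha$ versus $x_\alpha^{-1}$ guaranteed by the preliminary density observation is exactly what makes these boxes exist and drives the recursion through.
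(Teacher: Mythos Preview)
Your treatment of item~(1) is correct and is exactly the paper's argument.

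For items~(2) and~(3) the paper takes a much shorter and conceptually cleaner route than your transfinite construction. It observes that the diagonal-inverse set $X^\sharp=\{(x,x^{-1}):x\in X\}$ is a \emph{topological} subgroup of the paratopological group $X\times X$: multiplication is inherited, and inversion on $X^\sharp$ is the coordinate swap $(x,x^{-1})\mapsto(x^{-1},x)$, which is continuous. If $X\times X$ has countable spread then so does $X^\sharp$; if $X$ is $T_1$ then $X^\sharp$ is closed in $X\times X$ (as you noted) and inherits countable extent. Now one invokes the standard fact (cited as \cite[3.4.7]{AT}) that a topological group with countable spread, or a $T_1$ topological group with countable extent, is $\w$-narrow. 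Finally, $\w$-narrowness of $X^\sharp$ immediately gives total $\w$-narrowness of $X$: the basic neighborhood $(U\times U)\cap X^\sharp$ of the identity in $X^\sharp$ equals $\{(x,x^{-1}):x\in U\cap U^{-1}\}$, so a countable left-covering of $X^\sharp$ by its translates yields a countable left-covering of $X$ by translates of $U\cap U^{-1}$. This reduces (2) and (3) to~(1).

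Your direct construction is essentially what one gets by unwinding the proof of ``not $\w$-narrow $\Rightarrow$ uncountable (closed) discrete set'' inside $X^\sharp$, but without naming $X^\sharp$ you lose the safety net of working in an honest topological group, and this shows up as a genuine gap. You assert that $x_\alpha\notin\overline{V^{-1}}$ implies $x_\alpha^{-1}\notin\overline V$. In a paratopological group this inference is invalid: $\overline{V^{-1}}$ and $(\overline V)^{-1}$ need not coincide when inversion is discontinuous, which is precisely the situation you are assuming for contradiction. What $x_\alpha\notin\overline{V^{-1}}$ actually gives you is an \emph{open} neighborhood $A_\alpha$ of $x_\alpha$ disjoint from $V^{-1}$; this excludes all $x_\beta^{-1}$ (since $x_\beta\in V$) from the \emph{first} coordinate, not the second, so your description of which coordinate the box separates is inverted, and the remaining bookkeeping (separating $x_\alpha$ from the earlier $x_\beta$'s without $T_1$ in case~(2), and keeping the selection region nonempty after removing growing families of sets) is left unverified. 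The $X^\sharp$ formulation sidesteps all of this: inside $X^\sharp$ the neighborhoods $(V\times V)\cap X^\sharp$ are automatically symmetric, so the classical one-line recursion ``pick $x_\alpha\notin\bigcup_{\beta<\alpha}x_\beta U$'' produces a uniformly discrete set with no closure-vs-inverse subtleties.
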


\begin{proof} 
1. If $X$ is totally $\w$-narrow, then for every neighborhood $U\subset X$ of the unit there exists a countable set $C\subset X$ such that $X=C\cdot (U\cap U^{-1})$. Since $X$ is Baire, the set $\overline{U\cap U^{-1}}$ has non-empty interior in $X$ and by Lemma~\ref{l:P1}, $X$ is a topological group.
\smallskip

2,3. It is easy to check that the subgroup
$X^\sharp=\{(x,x^{-1}): x\in X\}$ of the paratopological group $X\times X$ is a topological group, which is $\w$-narrow if  the space $X^\sharp$ has countable spread or $X^\sharp$ is $T_1$ and has countable extent, see \cite[3.4.7]{AT}). The $\w$-narrowness of the topological group $X^\sharp$ implies the total $\w$-narowness of the paratopological group $X$. By the first item, $X$ is a topological group.
\end{proof}

\section{Acknowledgment}

The authors express their sincere thanks to Ramiro de la Vega for the reference to the CH-example \cite{HJ} of Hajnal and Juh\'asz that gives an answer to the question \cite{MO}, posed by the first author at {\tt MathOverflow}.

\end{document}